\documentclass[11pt, reqno]{amsart}

\usepackage{amssymb,latexsym,amsmath,amsfonts}
\usepackage{mathrsfs}
\usepackage{graphicx}
\usepackage[usenames]{color}
\usepackage{hyperref}
\usepackage{comment}
\usepackage{enumitem}

\definecolor{DPurple}{rgb}{0.46,0.2,0.69}

\numberwithin{equation}{section}

\allowdisplaybreaks

\theoremstyle{definition}
\newtheorem{definition}{Definition}[section]

\theoremstyle{remark}

 \theoremstyle{plain}
\newtheorem{theorem}{Theorem}[section]
\newtheorem{result}{Result}[section]

\newtheorem{lemma}[definition]{Lemma}



\begin{document}

\title[On $\varphi$-Normality of Harmonic Mappings and Their Families]{On $\varphi$-Normality of Harmonic Mappings and Their Families}

\author{Gopal Datt}
\address{Department of Mathematics, Babasaheb Bhimrao Ambedkar University, Lucknow, India}
\email{ggopal.datt@gmail.com, gopal.du@gmail.com}
\author{Ritesh Pal}
\address{Department of Mathematics, Babasaheb Bhimrao Ambedkar University, Lucknow, India}
\email{rriteshpal@gmail.com, ritesh.rs.math@bbau.ac.in}
%\author{Ashish Kumar Trivedi}
%address{ Department of Mathematics, University of Delhi, Delhi 110007}
%\email{trivediashish2016@gmail.com, aktrivedi@maths.du.ac.in}

\keywords{Complex harmonic mappings, normal functions, normal families, $\varphi$-normal families, $\varphi$-normal functions}
\subjclass[2020]{Primary: 30D45}

\begin{abstract}
In this paper, we present Lappan's five-point type criteria for 
$\varphi$-normality of sense-preserving harmonic mappings
and families of harmonic mappings in the unit disc $\mathbb{D}$.
We establish a normality criterion involving the 
higher-order spherical derivative of order $k$, and $k+2$ distinct 
test values under suitable conditions. We further reduce the 
number of test values to $\left[\frac{k}{2}\right]+2$ under an 
additional  differential condition involving the analytic and co-analytic 
parts. We also prove a generalization in which the 
spherical derivative is replaced by a polynomial with non-negative
real coefficients. 
\end{abstract}

\maketitle
\section{Introduction}

The theory of normal families is one of the fundamental
topics in complex analysis and has  important  applications in several 
areas including geometric function theory,
value distribution theory, and complex dynamics. In the early
twentieth century, Montel~\cite{Montel1912} introduced one of the 
most fundamental principles of compactness, namely {\it normal 
family} of meromorphic functions. The notion
of {\it normal family} provides a powerful framework
for studying the limiting behaviour of sequences of meromorphic
functions.
\smallskip

To make this notion precise, we first recall the definition of 
normal family.
Let $D\subset \mathbb{C}$ be a domain. A family of meromorphic 
functions on $D$  is said to be
\textit{normal} if every sequence in the family has a 
subsequence which converges locally uniformly with 
respect to the spherical metric, to a meromorphic
function or to infinity.
One of the most useful characterizations in the normal
families is provided by Marty's criterion, which asserts that,
a family of meromorphic functions is {\it normal} if and only if the
spherical derivative of the functions of the family is
locally uniformly bounded; recall that the spherical derivative
 of a meromorphic function $f$
is 
\begin{equation}\label{eq:sphder}
f^\#(z)=\frac{|f'(z)|}{1+|f(z)|^2}.
\end{equation}  
\smallskip 

The notion of normality can also be associated with an individual
meromorphic function. In this setting,
the idea of a normal function was first introduced 
by Yosida~\cite{Yosida1934} and
subsequently investigated by Noshiro~\cite{Noshiro1938}, although
they did not originally use the term {\it normal function}.
The terminology {\it normal function} was later introduced 
by Lehto and Virtanen~\cite{LehtoVirtanen1957}. In particular, let $D\subset\mathbb{C}$
 be a domain and let $f$ be a meromorphic function in
 domain $D$. The function $f$ is said to be
 {\bf normal} if the family $\mathcal{F}=\{f\circ\phi: \phi \in Aut{(D)}\}$
 is a normal family in domain $D$ in the sense of Montel.
 \smallskip
 
 The definition of normal meromorphic functions can be 
 characterized in terms of the spherical derivative as follows.

\begin{definition}\cite{LehtoVirtanen1957}
  A meromorphic function $f$ in the unit disc 
  $\mathbb{D}=\{z\in \mathbb{C}:|z|<1\}$ is
  called normal if
  \[
  \sup_{z\in\mathbb{D}}(1-|z|^2)f^\#(z)<\infty,
  \] 
  where $f^\#$ is the spherical derivative as in~\eqref{eq:sphder}.
\end{definition}

The theory of normal functions, initially developed
in the setting of meromorphic functions, has also been 
extended to complex-valued harmonic mappings. This extension is natural
because harmonic mappings play an important role in
geometric function theory.
Let $D\subset \mathbb{C}$ be a simply connected domain, 
a complex-valued harmonic mapping $f$
in $D$ can be represented in the form 
$f=h+\overline{g}$, where $h$ and $g$
are analytic functions in $D$ with $g(z_0)=0$ for some given
point $z_0\in D$. Its Jacobian is given by 
$J_f(z)=|h'(z)|^2-|g'(z)|^2$. Lewy~\cite{Lewy1936}
proved that
a harmonic mapping is locally univalent if and only 
if $J_f(z)\neq0$ in $D$. Moreover,
$f$ is sense-preserving, whenever $J_f(z)>0$ 
throughout the domain $D$.
\smallskip

 Motivated by Colonna's work~\cite{colona} on
  Bloch harmonic functions, Arbel\'aez,
 Hern\'andez, and Sierra~\cite{Arbelaez2019} 
 introduced the notion of normal harmonic mapping in the 
 unit disc $\mathbb{D}=\{z\in \mathbb{C}: |z|<1\}$.
 They obtained a quantitative characterization of normal harmonic 
 mapping in terms of the spherical derivative as follows.
  
\begin{definition}\cite[Proposition 2.1]{Arbelaez2019}
  A harmonic mapping $f=h+\overline{g}$ in $\mathbb{D}$ 
  is normal in $\mathbb{D}$ if 
  \[
  \sup_{z\in\mathbb{D}}(1-|z|^2)f^\#(z)<\infty
  \] 
  
\end{definition}

where
  \begin{equation}\label{eq:sphderhar}
    f^\#(z)=\frac{|h'(z)|+|g'(z)|}{1+|f(z)|^2}.
  \end{equation}

In analogy with the higher-order spherical derivatives for 
meromorphic functions formulated by Lappan~\cite{Lappan1977}, Bharti 
and Thin~\cite{bharatithin} introduced a corresponding
higher-order spherical derivative for harmonic mappings.
The extended (or higher-order) spherical derivative of order $k$ 
 of a harmonic mapping $f=h+\overline{g}$  is defined as follows~\cite[p. 389]{bharatithin}: 
\begin{equation}\label{eq:Ksphderhar}
    f^{\#(k)}(z): =\frac{|h^{(k)}(z)|+|g^{(k)}(z)|}{1+|f(z)|^{k+1}}.
  \end{equation}

In this paper, we study the notion of $\varphi$-normality for harmonic mappings and 
families of harmonic mappings. The notion of  $\varphi$-normality for meromorphic 
functions was introduced by Aulaskari, Makhmutov, and R\"aty\"a in \cite{Aulaskari}. 
Let us first recall the notion of a smoothly increasing function introduced in the study of
 $\varphi$-normal meromorphic function. Let $\varphi:[0,1)\to (0,\infty)$ be a 
continuous increasing function. We say $\varphi$ is smoothly increasing if
\[
\varphi(r)(1-r)\rightarrow \infty,\qquad \text{as}\,  (r\to1^-),
\] 
and
\[
\mathcal{R}_a(z)=\frac{\varphi\left(\left|a+\frac{z}{\varphi(|a|)}
\right|\right)}{\varphi(|a|)}\rightarrow 1, \qquad \text{as}\, 
(|a|\to 1^-),
\]
converges uniformly on compact subsets of $\mathbb{C}$ with,
 \[
\varphi(r)(1-r)\geq1 \quad 0\leq r <1.
\]

For such a function $\varphi$, a meromorphic 
  function in $\mathbb{D}$ is called {\it $\varphi$-normal} if
  \[
  \sup_{z\in\mathbb{D}}\frac{f^\#(z)}{\varphi(|z|)}<\infty,
  \]
  where $f^\#$ denotes the spherical derivative of $f$ as given in ~\eqref{eq:sphder}.
  \smallskip
  
  Following this framework, the authors, together with Bohra~\cite{Bohra},
  extended the notion of 
  $\varphi$-normality to harmonic mapping as follows.
\begin{definition}\cite[Definition 1.2]{Bohra}
    If $f=h+\overline{g}$ is
    a harmonic mapping in $\mathbb{D}$,
      then $f$ is said to be
   {\bf $\varphi$-normal} if
  \[
  \sup_{z\in\mathbb{D}}\frac{f^\#(z)}{\varphi(|z|)}<\infty,
  \]
  where the spherical derivative $f^\#$ is as defined in \eqref{eq:sphderhar}.

\end{definition}

The notion of $\varphi$-normality was first formulated for
meromorphic functions by Aulaskari et al. in~\cite{Aulaskari} and was subsequently 
extended to individual
harmonic mappings by Bohra et al. in~\cite{Bohra}. More recently, Charak et 
al.~\cite{charakbharti} extended
this framework further by introducing $\varphi$-normal families of
harmonic mappings.

\begin{definition}\cite[Definition 1.9]{charakbharti}
A family $\mathcal{F}$ of harmonic mappings in 
$\mathbb{D}$ is said to be
{\bf $\varphi$-normal} if for each compact set
$K\subset\mathbb{D}$ there exists $M>0$ such that
\[
\sup\left\{
\frac{f^{\#}(z)}{\varphi(|z|)}
:\;
z\in K,\;
f\in\mathcal{F}
\right\}
<M.
\]
\end{definition}

Bharti and Thin~\cite{bharatithin} applied the extended spherical 
derivative to derive a sufficient condition for  $\varphi$-normality 
of harmonic mappings in $\mathbb{D}$:
\begin{result}\label{Bharatik2}\cite[Theorem 1.8]{bharatithin}
  Let $k$ be a positive integer and let $f=h+\overline{g}$ be a 
  sense-preserving harmonic mapping in $\mathbb{D}$ such that
  \[\sup\left\{
|f^{(i)}(z)|:
z\in f^{-1}(\{0\}), \;
i=0,\ldots,k-1
\right\}<\infty.\]

If there exists a set $E$ of $k+4$ distinct points in $\mathbb{C}$
such that \[\sup\left\{
\frac{f^{\#(k)}(z)}{\varphi(|z|)^k}
:
z\in f^{-1}(E)
\right\}
<\infty,\] then $f$ is a $\varphi$-normal harmonic mapping.
\end{result} 
We refer to the elements of $E$ as test values and to $E$ as the test set.
\smallskip

Lappan~\cite[Theorem 1]{Lappan1974} established a five-point criterion for the 
normality of meromorphic functions, showing that it suffices to control the
 spherical derivative on the preimages of five distinct points. Subsequently,
 Bohra et al.~\cite[Theorem 5]{Bohra} obtained a Lappan-type criterion for 
$\varphi$-normal harmonic mappings with the same five-point test values. More recently, 
Charak et al.~\cite{charakbharti} reduced the number of test points to three 
for individual sense-preserving harmonic mappings.

\begin{result}\label{threevaluefunction}\cite[Theorem 1.13]{charakbharti}
Let $\varphi:[0,1)\to(0,\infty)$ be a continuous smoothly increasing
function. Suppose $f=h+\overline{g}$ is a non-constant sense-preserving
 harmonic mapping in $\mathbb{D}$ and $E$
  be a set of three distinct points in $\mathbb{C}$ such that
\[
\sup\left\{
\frac{f^{\#}(z)}{\varphi(|z|)}
:\;
z\in f^{-1}(E)
\right\}
<\infty.
\]

Then $f$ is a $\varphi$-normal harmonic mapping 
in $\mathbb{D}$.
\end{result}

The Results~\ref{Bharatik2} and~\ref{threevaluefunction} naturally 
lead to the following question:\smallskip

$\bullet$ {\it Can the cardinality of the test set $E$
 in Result~\ref{Bharatik2} be reduced from $k+4$ 
to $k+2$ test values?} \smallskip

We answer this question affirmatively 
by establishing a corresponding criterion involving the extended spherical
 derivative of order $k$.

\begin{theorem}\label{Mainfunction}
Let $\varphi:[0,1)\to(0,\infty)$ be a continuous smoothly 
increasing
function. Suppose
$f=h+\overline{g}$ is a sense-preserving harmonic
mapping in $\mathbb{D}$, and $k$ is a positive integer such that
\begin{equation}\label{eq:1MainfunHyp}
\sup\left\{
|f^{(i)}(z)|:
z\in f^{-1}(\{0\}),\;
i=0,\ldots,k-1
\right\}<\infty.
\end{equation}

If there exists a set $E$ of $k+2$ distinct 
points in $\mathbb{C}$ such that
\begin{equation}\label{eq:2MainfunHyp}
\sup\left\{
\frac{f^{\#(k)}(z)}{\varphi(|z|)^k}
:
z\in f^{-1}(E)
\right\}
<\infty,
\end{equation}
then $f$ is a $\varphi$-normal harmonic mapping, 
where $f^{\#(k)}$ is as in~\eqref{eq:Ksphderhar}.
\end{theorem}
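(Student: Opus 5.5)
We argue by contradiction, using a Zalcman–Lohwater–Pommerenke type rescaling adapted to $\varphi$-normality of harmonic mappings. Suppose $f$ is not $\varphi$-normal. Then there are $z_n\in\mathbb{D}$ with $|z_n|\to 1$ and $f^\#(z_n)/\varphi(|z_n|)\to\infty$. By the harmonic version of the rescaling lemma used by Bohra et al.\ and Charak et al., there exist $a_n\in\mathbb{D}$ with $|a_n|\to1$ and $\rho_n\to 0$ with $\rho_n\varphi(|a_n|)\to0$ such that
\[
F_n(\zeta)=f\Big(a_n+\frac{\rho_n\,\zeta}{\varphi(|a_n|)}\Big)=H_n(\zeta)+\overline{G_n(\zeta)}
\]
converges locally uniformly on $\mathbb{C}$ to a nonconstant harmonic mapping $F=H+\overline{G}$ with bounded spherical derivative. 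Here $H_n\to H$ and $G_n\to G$ locally uniformly, possibly after normalization of the additive constant. Since $f$ is sense-preserving, $|G_n'|<|H_n'|$. In the limit, Hurwitz's theorem applied to the dilatation $G_n'/H_n'$ gives that $G=\alpha \overline{\ }$-type: either $G'\equiv cH'$ with $|c|\le 1$, or $|G'|<|H'|$ wherever $H'\ne0$. We will use the fact, standard in these papers, that $F$ is then of the form $H+\overline{cH}+\text{const}$ or is sense-preserving and open. In either case $F$ behaves like a nonconstant entire function of finite order at most $2$, since its spherical derivative is bounded.

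For $k\ge 1$, we rescale $f^{\#(k)}$. The derivatives satisfy
\[
H_n^{(k)}(\zeta)=\Big(\frac{\rho_n}{\varphi(|a_n|)}\Big)^{k}h^{(k)}(\cdot),
\]
and similarly for $G_n^{(k)}$. Using the smooth-increase property $\mathcal{R}_{a_n}\to1$, for $\zeta_n\to\zeta_0$ we get
\[
F_n^{\#(k)}(\zeta_n)=\rho_n^k\,\frac{\varphi(|w_n|)^k}{\varphi(|a_n|)^k}\cdot\frac{f^{\#(k)}(w_n)}{\varphi(|w_n|)^k},\qquad w_n=a_n+\frac{\rho_n\zeta_n}{\varphi(|a_n|)}.
\]
Hence whenever $F_n(\zeta_n)\in E$, hypothesis \eqref{eq:2MainfunHyp} gives $F_n^{\#(k)}(\zeta_n)\to0$. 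Now take $e\in E$ and $\zeta_0$ with $F(\zeta_0)=e$. Since $F$ is locally a nonconstant open map (or $H$ is nonconstant in the degenerate case), Hurwitz yields $\zeta_n\to\zeta_0$ with $F_n(\zeta_n)=e$. Therefore $|H^{(k)}(\zeta_0)|+|G^{(k)}(\zeta_0)|=0$. So every $e$-point of $F$ is a zero of both $H^{(k)}$ and $G^{(k)}$. Similarly, hypothesis \eqref{eq:1MainfunHyp} is rescaled at zeros. Since the factors $(\rho_n/\varphi)^i\to0$, we get $F^{(i)}=0$ at the zeros of $F$ for $1\le i\le k-1$, so zeros of $F$ have multiplicity at least $k$.

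The main obstacle is the final value-distribution step: deriving a contradiction from the fact that all $e$-points of $F$, for $k+2$ values $e$, are zeros of $H^{(k)}$. This forces those $e$-points to have multiplicity at least $k+1$ in the analytic sense. I would first handle the degenerate case $F=H+\overline{cH}$ (with $|c|<1$, or $c$ unimodular where $F$ takes values on a line), reducing everything to the entire function $H$. In the sense-preserving case I would work with $H$ directly, noting that the $e$-points of $F$ are locally the $e'$-points of a perturbed map. The plan is to apply Nevanlinna's second main theorem to $H$ (entire, with no poles) for $k+2$ values that are totally ramified with multiplicity at least $k+1$:
\[
(k+1)\,T(r,H)\le \sum_{j=1}^{k+2}\overline{N}\big(r,1/(H-e_j)\big)+S\le \frac{k+2}{k+1}\,T(r,H)+S,
\]
which is a contradiction for $k\ge1$. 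If $F$ omits some $e_j$, then Picard's theorem handles the case in which more than one value is omitted. The delicate point is justifying that $e$-points of $F$ correspond to multiple points of $H$ in the genuinely harmonic case. If that correspondence fails, I would instead show that $H^{(k)}\equiv G^{(k)}\equiv0$ on an open set, by having $F^{-1}(E)$ accumulate. That would make $H$ and $G$ polynomials of degree less than $k$, and then the counting of $e$-points of a polynomial harmonic map against $k+2$ values gives the contradiction.
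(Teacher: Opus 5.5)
Your proposal has a genuine gap in the final value-distribution step. You argue that because $H^{(k)}$ vanishes at every $e$-point, those points have multiplicity at least $k+1$. For $k\ge 2$ this is false. Multiplicity $\ge k+1$ requires $H'=\cdots=H^{(k)}=0$ at the point, and you only know that the $k$-th derivative vanishes; for example, $\sin$ has simple zeros at which $\sin''$ vanishes. In this setting the $e$-points are in fact all \emph{simple}. Since $f$ is sense-preserving, $h'\neq 0$ in $\mathbb{D}$, so the rescaled $H_n'$ are zero-free. Hurwitz's theorem then makes $H'$ zero-free, because $H'\equiv 0$ would force $G'\equiv 0$ and $F$ constant. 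So the estimate $\overline{N}(r,1/(H-e_j))\le \frac{1}{k+1}N(r,1/(H-e_j))$ behind your displayed inequality has no basis; the inference is valid only for $k=1$. A symptom of this is that your zero-multiplicity claim for $F$ is derived but never used.

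Two ideas are missing.

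First, the reduction to $H$ is not delicate. If $F$ is sense-preserving on $\mathbb{C}$, then $G'/H'$ is an entire function of modulus $<1$, hence a constant $c$ with $|c|<1$ by Liouville. So $F=a$ if and only if $H=a^*$, where $a\mapsto a^*$ is an explicit injective real-affine map. There is therefore no separate ``genuinely harmonic'' case. Your fallback of making $F^{-1}(E)$ accumulate could not work anyway, since that set is discrete.

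Second, instead of multiplicities, count zeros of $H^{(k)}$. Set $E^*=\{a^*:a\in E\}$. The $a^*$-points of $H$ for different $a\in E$ are disjoint, and all are zeros of $H^{(k)}$, so
\[
\sum_{a\in E}\overline{N}\Bigl(r,\frac{1}{H-a^*}\Bigr)\le \overline{N}\Bigl(r,\frac{1}{H^{(k)}}\Bigr)\le T(r,H^{(k)})+O(1)\le T(r,H)+S(r,H).
\]
The middle inequality is the first main theorem. The last one uses $m(r,H^{(k)})\le m(r,H)+m(r,H^{(k)}/H)$ together with the logarithmic derivative lemma. The second main theorem applied to the $k+3$ values $E^*\cup\{\infty\}$ then gives $(k+1)T(r,H)\le T(r,H)+S(r,H)$, that is, $kT(r,H)\le S(r,H)$, a contradiction.

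This is exactly where your multiplicity claim for the zeros of $F$ is needed: it guarantees $H^{(k)}\not\equiv 0$. A polynomial $H$ of degree $<k$ would give $F$ a zero of multiplicity $<k$.
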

\medskip

For $k=1$, Theorem~\ref{Mainfunction} reduces to
Result~\ref{threevaluefunction}, thus Theorem~\ref{Mainfunction}
may be viewed as a higher-order spherical derivative extension of the 
three-point criterion by Charak et al.~\cite{charakbharti}.
 \smallskip
 
 Charak et al.~\cite{charakbharti} also established a Lappan's five-point type 
 criterion for $\varphi$-normal families of sense-preserving harmonic mappings,
 as follows.
 
\begin{result}\label{threevalue}\cite[Theorem 1.12]{charakbharti}
Let $\varphi:[0,1)\to(0,\infty)$ be a continuous smoothly increasing
function. Suppose $\mathcal{F}$ is a family of sense-preserving harmonic
mappings in $\mathbb{D}$ and $E$ is a set of three distinct
complex numbers such that
\[
\sup\left\{
\frac{f^{\#}(z)}{\varphi(|z|)}
:\;
z\in f^{-1}(E),\;
f\in\mathcal{F}
\right\}
<\infty.
\]

Then $\mathcal{F}$ is $\varphi$-normal family of harmonic mappings in $\mathbb{D}$.
\end{result}

This result, together with the preceding Theorem~\ref{Mainfunction} for
$\varphi$-normal harmonic mappings 
 naturally suggests investigating {\it whether an analogous
normality criterion can be obtained for families of harmonic mappings 
involving higher-order spherical derivatives.}
\smallskip

To address this question, we establish an analogue of 
Theorem~\ref{Mainfunction} for families of harmonic mappings
by incorporating the higher-order spherical derivative.

\begin{theorem}\label{mainfamily}
Let $\varphi:[0,1)\to(0,\infty)$ be a continuous smoothly increasing
function and $k$ be a positive integer. Suppose $\mathcal{F}$
 is a family of sense-preserving harmonic
mappings in $\mathbb{D}$ such that
\begin{equation}\label{eq:1mainfamily}
\sup\left\{
|f^{(i)}(z)|:
z\in f^{-1}(\{0\}), f\in \mathcal{F},\;
i=0,\ldots,k-1
\right\}<\infty.
\end{equation}

If there exists a set $E$ of $k+2$ distinct 
points in $\mathbb{C}$ such that
\begin{equation}\label{eq:2mainfamily}
\sup\left\{
\frac{f^{\#(k)}(z)}{\varphi(|z|)^k}
:
z\in f^{-1}(E), f\in \mathcal{F}
\right\}
<\infty,
\end{equation}
then $\mathcal{F}$ is a $\varphi$-normal family.
\end{theorem}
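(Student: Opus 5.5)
We argue by contradiction. As the paper words it, $\varphi$-normality of $\mathcal{F}$ has to be read as the \emph{uniform} statement
\[
\sup\Bigl\{\frac{f^{\#}(z)}{\varphi(|z|)} : z\in\mathbb{D},\ f\in\mathcal{F}\Bigr\}<\infty .
\]
Bounds over compact subsets of $\mathbb{D}$ alone would only give ordinary normality and would never use the smoothly increasing hypothesis. So suppose the supremum is infinite, and choose $f_n=h_n+\overline{g_n}\in\mathcal{F}$ and $z_n\in\mathbb{D}$ with $f_n^{\#}(z_n)/\varphi(|z_n|)\to\infty$.

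\textbf{Step 1: rescaling.} We apply the Zalcman--Lohwater--Pommerenke type rescaling lemma for $\varphi$-normality of harmonic mappings, in the form used by Charak et al.\ in the proof of Result~\ref{threevalue}; it is built from the two defining properties of a smoothly increasing $\varphi$. It should give points $a_n\in\mathbb{D}$ and radii $\rho_n>0$ with $\rho_n\varphi(|a_n|)\to0$, such that
\[
F_n(\zeta)=f_n(a_n+\rho_n\zeta)=H_n(\zeta)+\overline{K_n(\zeta)}
\]
converges locally uniformly on $\mathbb{C}$ to a non-constant harmonic mapping $G=H+\overline{K}$, with $G^{\#}(\zeta)\le G^{\#}(0)=1$. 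The rescaling must be centered so that $|a_n+\rho_n\zeta|$ stays controlled. Then uniform convergence $\mathcal{R}_{a_n}\to1$ on compacta gives
\[
\varphi(|a_n+\rho_n\zeta|)/\varphi(|a_n|)\to1
\]
locally uniformly. If $|a_n|$ stays away from $1$, the classical Zalcman lemma replaces this step. Since each $F_n$ is sense-preserving, $|K_n'|<|H_n'|$, so $|K'|\le|H'|$. A Hurwitz-type argument for sense-preserving harmonic maps (argument principle) then ensures that every $a$-point of $G$ is a limit of $a$-points of $F_n$; if $G$ degenerates, the same holds via its analytic part.

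\textbf{Step 2: transferring the hypotheses.} Let $\zeta_0$ satisfy $G(\zeta_0)=e\in E$, and pick $\zeta_n\to\zeta_0$ with $F_n(\zeta_n)=e$. Put $w_n=a_n+\rho_n\zeta_n$ and let $M$ be the constant in \eqref{eq:2mainfamily}. Then
\[
|H_n^{(k)}(\zeta_n)|+|K_n^{(k)}(\zeta_n)|=\rho_n^k\bigl(|h_n^{(k)}|+|g_n^{(k)}|\bigr)(w_n)\le M(1+|e|^{k+1})\,\bigl(\rho_n\varphi(|a_n|)\bigr)^k\,\frac{\varphi(|w_n|)^k}{\varphi(|a_n|)^k}\to0 .
\]
Hence $H^{(k)}(\zeta_0)=K^{(k)}(\zeta_0)=0$ at every point of $G^{-1}(E)$. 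In the same way, \eqref{eq:1mainfamily} gives $\rho_n^i f_n^{(i)}\to0$ at zeros, since $\rho_n\to0$. So for $1\le i\le k-1$, $G^{(i)}$ (i.e.\ $H^{(i)}$ and $K^{(i)}$) vanishes at every zero of $G$, and zeros of $G$ have multiplicity at least $k$.

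\textbf{Step 3: the contradiction (main obstacle).} It remains to show that no non-constant sense-preserving harmonic $G$ on $\mathbb{C}$ with bounded spherical derivative can have $H^{(k)}=K^{(k)}=0$ on $G^{-1}(E)$ for $k+2$ values while its zeros have multiplicity $\ge k$. This is exactly the value-distribution lemma that drives Theorem~\ref{Mainfunction}, and we will invoke that lemma (proved in the function case) rather than redo it.

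If we had to supply it ourselves, we would proceed as follows. First use $|K'|\le|H'|$ and the bounded spherical derivative to see that $G$ has order at most $2$. Next, since $K^{(k)}$ also vanishes on $G^{-1}(E)$, reduce to the case $K$ is a polynomial of degree $<k$ (or constant). Then apply Nevanlinna's second main theorem to $H$ (shifting by the polynomial $K$) together with a Milloux-type estimate. Each $e$-point contributes multiplicity at least $k+1$ or is a zero of $H^{(k)}$, which yields
\[
\sum_{e\in E}\overline{N}(r,e)\le \tfrac{k+1}{k+2}\cdot\text{(count)}+S(r),
\]
and this is incompatible with $k+2$ values.

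The delicate points are making the rescaling respect $\varphi$ uniformly across the family, and the harmonic Hurwitz step when $K\not\equiv$ const.
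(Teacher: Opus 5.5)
Your Steps 1 and 2 match the paper's proof. The rescaling is Lemma~\ref{zalcmann} with $\beta=0$; your $\rho_n$ is the paper's $\rho_n/\varphi(|z_n|)$. The Hurwitz transfer gives exactly the paper's Claims 1 and 2. Separately, you replace the paper's compact-subset definition of a $\varphi$-normal family with a global supremum. Lemma~\ref{zalcmann} is stated for the compact-subset definition, so under your reading the uniform rescaling you need is only asserted.

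The genuine gap is Step 3, and that is where the reduction to $k+2$ values actually happens. There is no stand-alone value-distribution lemma in the paper for you to invoke. Theorem~\ref{Mainfunction} concerns a map on $\mathbb{D}$, and its proof carries out the entire-function argument inline, so you have to supply that argument yourself. Your fallback sketch would not work:
\begin{itemize}
\item Nothing forces $K$ to be a polynomial of degree $<k$. The vanishing of $K^{(k)}$ on $G^{-1}(E)$ is automatic once $H^{(k)}$ vanishes there, and in fact $K=cH+b$ need not be a polynomial.
\item You have no multiplicity information at $e$-points; the multiplicity-$k$ information concerns only the zeros of $G$. So the $\frac{k+1}{k+2}$ count has no source.
\item The order bound plays no role.
\end{itemize}

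The missing idea is the Liouville step you stop just short of. Since $G$ is sense-preserving on all of $\mathbb{C}$, $K'/H'$ is a bounded entire function, so $K'=cH'$ with $|c|<1$. Hence $K=cH+b$, and $G(\zeta)=e$ if and only if $H(\zeta)=e^*$, for an explicit injective map $e\mapsto e^*$. The same fact is what justifies splitting $G^{(i)}(\zeta_0)=0$ into $H^{(i)}(\zeta_0)=K^{(i)}(\zeta_0)=0$ in your Step 2.

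The $H$-preimages of the $k+2$ distinct values $e^*$ are pairwise disjoint, and all of them are zeros of $H^{(k)}$. Since $H$ is entire, $T(r,H^{(k)})=m(r,H^{(k)})\le m(r,H)+m(r,H^{(k)}/H)$, and the logarithmic derivative lemma gives
\[
\sum_{e\in E}\overline{N}\Bigl(r,\frac{1}{H-e^*}\Bigr)\le N\Bigl(r,\frac{1}{H^{(k)}}\Bigr)\le T(r,H^{(k)})+O(1)\le T(r,H)+S(r,H).
\]
Applying the second fundamental theorem to $H$ with the $k+3$ values $\{e^*\}\cup\{\infty\}$ gives $(k+1)T(r,H)\le T(r,H)+S(r,H)$, i.e.\ $kT(r,H)\le S(r,H)$, which is impossible.

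The hypothesis that zeros have multiplicity at least $k$ is used only to guarantee $H^{(k)}\not\equiv0$. If $H$ were a non-constant polynomial of degree $d<k$, then $H-0^*$ would have a zero of order at most $d<k$, contradicting Claim 1.
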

 \medskip
For $k=1$, Theorem~\ref{mainfamily} reduces to 
Result~\ref{threevalue}, which is therefore obtained  
as a special case of Theorem~\ref{mainfamily}.
\smallskip

The four-point criterion established by Bohra et al.~\cite[Theorem 1.6]{Bohra}
motivates the following problem: {\it can the number 
of test values in Result~\ref{threevaluefunction}
 be further reduced under an additional differential condition?}
 We show that two test values suffice when a suitable boundedness
condition is imposed on a differential expression involving the analytic 
and co-analytic parts of harmonic mapping. This leads to the
following two-point refinement of Result~\ref{threevaluefunction}.
\begin{theorem}\label{twovaluefunction}
  Let $\varphi$ be a smoothly increasing function. 
  Suppose $f$ is a 
   sense-preserving harmonic mapping in $\mathbb{D}$.
    If there exists a set $E$ of $2$ 
   distinct points in $\mathbb{C}$ such that
\begin{equation}\label{eq:1twovaluefunction}
\sup\left\{
\frac{f^\#(z)}{\varphi(|z|)}
:
z\in f^{-1}(E)
\right\}
<\infty,
\end{equation}
and
\begin{equation}\label{eq:2twovaluefunction}
\sup\left\{
\frac{|h''(z)|+|g''(z)|}
{1+\left(|h'(z)|+|g'(z)|\right)^2}
:
z\in f^{-1}(E)
\right\}
<\infty,
\end{equation}
then $f$ is a $\varphi$-normal harmonic mapping.
\end{theorem}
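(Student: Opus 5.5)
We argue by contradiction through a Zalcman–Lohwater–Pommerenke type rescaling adapted to $\varphi$-normality, as in the proofs of the results of Bohra et al.~\cite{Bohra} and Charak et al.~\cite{charakbharti}. Suppose $f=h+\overline{g}$ is not $\varphi$-normal. The standard $\varphi$-version of the Zalcman lemma for harmonic mappings then applies. It is built from the smoothly increasing hypothesis, in particular $\mathcal{R}_a(z)\to1$ uniformly on compacta. It yields points $a_n\in\mathbb{D}$ with $|a_n|\to1$ and numbers $\rho_n>0$ with $\rho_n\varphi(|a_n|)\to0$. The rescaled maps
\[
F_n(\zeta)=f\bigl(a_n+\rho_n\zeta\bigr)=h(a_n+\rho_n\zeta)+\overline{g(a_n+\rho_n\zeta)}
\]
converge locally uniformly on $\mathbb{C}$, with respect to the spherical metric, to a non-constant harmonic mapping $F=H+\overline{G}$. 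This limit satisfies $F^\#(\zeta)\le F^\#(0)=1$. Since each $F_n$ is sense-preserving, $|G'|\le|H'|$. Hence the dilatation $G'/H'$ is bounded entire, so $G=\alpha H+c$ with $|\alpha|\le1$. Because $F$ is non-constant and of bounded spherical derivative on $\mathbb{C}$, $H$ is a non-constant entire function, and in fact $F$ is a non-constant ``normal'' harmonic mapping on $\mathbb{C}$.

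Next we exploit the two test values. Let $E=\{w_1,w_2\}$. Suppose $F(\zeta_0)=w_j$. Since $F$ is sense-preserving and non-constant (Lewy, or the argument principle for harmonic maps), Hurwitz's theorem for harmonic mappings gives points $\zeta_n\to\zeta_0$ with $F_n(\zeta_n)=w_j$, so $z_n=a_n+\rho_n\zeta_n\in f^{-1}(E)$. From \eqref{eq:1twovaluefunction},
\[
F_n^\#(\zeta_n)=\rho_n f^\#(z_n)\le M\rho_n\varphi(|z_n|)=M\rho_n\varphi(|a_n|)\,\mathcal{R}_{a_n}(\rho_n\varphi(|a_n|)\zeta_n)\to0 .
\]
Therefore $F^\#(\zeta_0)=0$, i.e.\ $H'(\zeta_0)=G'(\zeta_0)=0$. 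So every $w_j$-point of $F$ is a critical point. Condition \eqref{eq:2twovaluefunction} is now used to improve this. At $z_n$ we have $|h'(z_n)|+|g'(z_n)|\le M'\varphi(|z_n|)$ (the numerator of $f^\#$, since $|f(z_n)|$ is bounded). Thus $1+(|h'|+|g'|)^2\le C\varphi(|a_n|)^2$ at these points, and
\[
|H_n''(\zeta_n)|+|G_n''(\zeta_n)|=\rho_n^2\bigl(|h''(z_n)|+|g''(z_n)|\bigr)\le C'\rho_n^2\varphi(|a_n|)^2\to0 .
\]
Hence $H''(\zeta_0)=G''(\zeta_0)=0$ as well. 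Consequently every $w_j$-point of $F$ is a zero of multiplicity at least $3$ of $H-\cdots$. More precisely, $F-w_j$ vanishes together with $H',H''$. Using $G=\alpha H+c$, this means $H+\overline{\alpha H}$ takes the value $w_j-\overline c$ only at points where $H'$ and $H''$ vanish.

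The final step, and the main obstacle, is to contradict this with a value-distribution statement for the limit. If $|\alpha|<1$, the map $u\mapsto u+\overline{\alpha u}$ is an $\mathbb{R}$-linear bijection of $\mathbb{C}$. Hence $F=w_j$ iff $H=b_j$ for two distinct values $b_1,b_2$, and every $b_j$-point of $H$ has multiplicity $\ge3$. Moreover, $H$ has bounded spherical-type derivative, so $H$ is of order at most $2$ (really, a Yosida-type function). Nevanlinna's second main theorem gives, for totally ramified values with multiplicity $\ge3$, the defect sum
\[
\sum_j\Bigl(1-\tfrac1{m_j}\Bigr)\le 2\quad\text{for meromorphic, but}\quad\le1\ \text{for entire } H \text{ (as }\infty\text{ is already omitted)}.
\]
Two values with $m_j\ge3$ give $4/3>1$, a contradiction unless $H$ omits some $b_j$. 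Omitting one value is handled by the same count ($1+2/3>1$), and omitting both contradicts Picard. The degenerate case $|\alpha|=1$ has $F$ taking values on a line-like set. Here I would use that $\mathrm{Re}(\overline{\beta}F)$, with $\alpha=\beta^2$, is constant, so the image of $F$ lies on a line and $H$ is constant, contradicting non-constancy. Care is needed to make the Hurwitz step rigorous for harmonic maps and to ensure the multiplicity bookkeeping survives the $\mathbb{R}$-linear change of variables. I expect that bookkeeping, rather than the rescaling, to be the delicate point.
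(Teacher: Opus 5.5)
Your argument in the case $|\alpha|<1$ is the paper's proof. The steps match one for one:
\begin{itemize}
\item the same rescaling (your $\rho_n$ is the paper's $\rho_n/\varphi(|z_n|)$);
\item Lemma~\ref{hurwitz} to pull $w_j$-points of $F$ back into $f^{-1}(E)$;
\item the first hypothesis giving $H'(\zeta_0)=G'(\zeta_0)=0$;
\item the second hypothesis giving $H''(\zeta_0)=G''(\zeta_0)=0$. Your direct bound $\rho_n^2(|h''(z_n)|+|g''(z_n)|)\le C'\rho_n^2\varphi(|a_n|)^2\to0$ is a slightly cleaner version of the paper's ratio computation.
\item the $\mathbb{R}$-linear change of variables $F=w_j\iff H=b_j$, followed by the second main theorem for the entire function $H$. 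This is exactly Lemma~\ref{twovalueharmonic}, and your count $\tfrac23+\tfrac23>1$ is the same computation.
\end{itemize}
That part is correct.

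Your last paragraph, on $|\alpha|=1$, is wrong, for three reasons.
\begin{itemize}
\item A line-valued limit does not force $H$ to be constant. Take $F(\zeta)=\mathrm{Re}\,\zeta$, i.e.\ $H=G=\zeta/2$ and $\alpha=1$. It is non-constant and satisfies $F^\#(\zeta)=1/(1+(\mathrm{Re}\,\zeta)^2)\le F^\#(0)=1$, so it is an admissible candidate limit.
\item In this case your earlier Hurwitz step is unavailable, so you do not even know that the $w_j$-points of $F$ are critical. The maps $F_n(\zeta)=\tfrac12\zeta+\overline{\tfrac12(1-\tfrac1n)\zeta}$ are sense-preserving and converge locally uniformly to $\mathrm{Re}\,\zeta$. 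Yet for real $w$, $F_n^{-1}(w)$ is a single point tending to $w$, while $F^{-1}(w)$ is the whole line $\mathrm{Re}\,\zeta=w$.
\item The case is not vacuous. The dilatation of $F_n$ is $\omega(a_n+\rho_n\zeta)$ with $\omega=g'/h'$. Since $\rho_n/(1-|a_n|)\le\rho_n\varphi(|a_n|)\to0$, Schwarz--Pick forces the limit dilatation to be the constant $\lim\omega(a_n)$. That constant is unimodular as soon as $|\omega(a_n)|\to1$, and nothing in the hypotheses excludes this.
\end{itemize}

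The paper does not address this case. It asserts that the rescaled limit is a sense-preserving harmonic mapping, whereas Lemma~\ref{zalharfun} only yields a non-constant harmonic limit. It then uses $|c|<1$ in Lemma~\ref{twovalueharmonic}. At the paper's level of rigour, you should drop your final paragraph and assume $|\alpha|<1$ as the paper does. Genuinely closing the case $|\alpha|=1$ needs a new idea, and your sketch does not supply one.
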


In Theorem~\ref{twovaluefunction}, a two-point criterion is 
 established for an individual sense-preserving harmonic mapping. 
This motivates investigating {\it whether the same reduction to two 
test values can be achieved for families of harmonic mappings in 
 Result~\ref{threevalue}.} We show that, under the corresponding 
uniform differential conditions, a two-point criterion can be 
established for families of harmonic mappings.
 \begin{theorem}\label{twovaluefamily}
  Let $\varphi$ be a smoothly increasing function. 
  Suppose $\mathcal{F}$ is
   a family of sense-preserving harmonic mappings in 
   $\mathbb{D}$.
If there exists a set $E$ of $2$ distinct points 
in $\mathbb{C}$ such that
\begin{equation}\label{eq:1twovaluefamily}
\sup\left\{
\frac{f^\#(z)}{\varphi(|z|)}
:
z\in f^{-1}(E), \, f \in \mathcal{F}
\right\}
<\infty,
\end{equation}
and
\begin{equation}\label{eq:2twovaluefamily}
\sup\left\{
\frac{|h''(z)|+|g''(z)|}
{1+\left(|h'(z)|+|g'(z)|\right)^2}
:
z\in f^{-1}(E), \, f \in \mathcal{F}
\right\}
<\infty,
\end{equation}
then $\mathcal{F}$ is a $\varphi$-normal family.
\end{theorem}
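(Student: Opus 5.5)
We argue by contradiction against the global bound $\sup\{f^\#(z)/\varphi(|z|): z\in\mathbb{D},\, f\in\mathcal{F}\}<\infty$, which is the statement to be proved. Points are allowed to run to $\partial\mathbb{D}$. So suppose there are $f_n=h_n+\overline{g_n}\in\mathcal{F}$ and $z_n\in\mathbb{D}$ with $f_n^\#(z_n)/\varphi(|z_n|)\to\infty$.

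\textbf{Step 1: rescaling.} I would use the Zalcman--Lohwater--Pommerenke type rescaling lemma for $\varphi$-normality of harmonic mappings, as in Bohra et al.\ and Charak et al. The usual argument applies the Zalcman maximisation to $f_n^\#(z)/\varphi(|z|)$ on discs centred near $z_n$. It uses both properties of the smoothly increasing function: $\varphi(r)(1-r)\to\infty$ as $r\to1^-$, and $\mathcal{R}_a\to1$ locally uniformly. It produces points $a_n\in\mathbb{D}$ and numbers $\rho_n\to0$ such that
\[
F_n(\zeta)=f_n\!\left(a_n+\frac{\rho_n\zeta}{\varphi(|a_n|)}\right)=H_n(\zeta)+\overline{G_n(\zeta)}
\]
is defined on discs exhausting $\mathbb{C}$. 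Moreover $F_n\to F=H+\overline G$ locally uniformly on $\mathbb{C}$, where $F$ is a non-constant harmonic mapping with $F^\#\le F^\#(0)=1$, and $H_n',G_n'$ (and $H_n'',G_n''$) converge to $H',G',H'',G''$.

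The case where $|a_n|$ stays away from $1$ is covered by the same lemma, because $\varphi$ is bounded above and below on compact sets. Since $J_{F_n}>0$ we have $|G_n'|<|H_n'|$, hence $|G'|\le|H'|$. Because $H'$ vanishes only at isolated points, $\omega=G'/H'$ extends to an entire function with $|\omega|\le1$. By Liouville's theorem $\omega\equiv c$ with $|c|\le1$. If $|c|=1$, then $F=H+\bar c\overline{H}+\text{const}$ has one-dimensional image and $J_F\equiv0$, and I would rule this out from $F^\#(0)=1$ combined with the degree argument below. So I take $|c|<1$. Then $F=L\circ H+b$ with $L(w)=w+\bar c\bar w$ a real-linear homeomorphism of $\mathbb{C}$ and $H$ a non-constant entire function.

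\textbf{Step 2: transferring the hypotheses.} Let $e\in E$ and let $\zeta_0\in F^{-1}(e)$. Since $F-e=L\circ(H-L^{-1}(e-b))$, the zero $\zeta_0$ is isolated with positive index. A Hurwitz-type argument-principle argument for the sense-preserving maps $F_n$ then gives $\zeta_n\to\zeta_0$ with $F_n(\zeta_n)=e$. Put $w_n=a_n+\rho_n\zeta_n/\varphi(|a_n|)$. By \eqref{eq:1twovaluefamily},
\[
F_n^\#(\zeta_n)=\frac{\rho_n}{\varphi(|a_n|)}f_n^\#(w_n)\le M\rho_n\,\mathcal{R}_{a_n}(\rho_n\zeta_n)\to0 .
\]
Hence $H'(\zeta_0)=G'(\zeta_0)=0$. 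By \eqref{eq:2twovaluefamily},
\[
|H_n''(\zeta_n)|+|G_n''(\zeta_n)|\le M\Big(\frac{\rho_n}{\varphi(|a_n|)}\Big)^2+M\big(|H_n'(\zeta_n)|+|G_n'(\zeta_n)|\big)^2\to0,
\]
using $\varphi\ge1/(1-r)\ge1$. Hence $H''(\zeta_0)=0$. So every $\mathrb{}$-point is a zero of $H-L^{-1}(e-b)$ of multiplicity at least $3$, for each of the two distinct values $L^{-1}(e-b)$, $e\in E$.

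\textbf{Step 3: contradiction.} $H$ is a non-constant entire function, so $\infty$ is an omitted value. It also has two finite values all of whose preimages have multiplicity $\ge3$. By Nevanlinna's second main theorem, $\sum(1-1/m_j)\le2$, but here $1+\tfrac23+\tfrac23>2$, a contradiction. This shows the global supremum is finite, so $\mathcal{F}$ is $\varphi$-normal; in particular the local bound of the stated definition holds on every compact $K$.

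The main obstacle is Step 1: the rescaling lemma must be valid uniformly up to the boundary, and the limit must be shown to be non-degenerate ($|c|<1$). I would first try to read this off from the sense-preserving property together with $F^\#(0)=1$.
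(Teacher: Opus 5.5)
\textbf{Verdict: genuine gap — the degenerate case $|c|=1$ cannot be excluded, and the statement as written is false.}

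Your route is the paper's route. You rescale, pull back $E$-points by a Hurwitz argument, and use \eqref{eq:1twovaluefamily} and \eqref{eq:2twovaluefamily} to get $H'=G'=H''=G''=0$ there. You then finish with the Nevanlinna count behind Lemma~\ref{twovalueharmonic}. Your Step~2 bound $|H_n''|+|G_n''|\le M s_n^2+M(|H_n'|+|G_n'|)^2$, with $s_n=\rho_n/\varphi(|a_n|)$, is correct. Aiming at the global supremum is harmless, since it implies the local definition.

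The real problem is the step you postponed: excluding $|c|=1$. This cannot be done from $F^\#(0)=1$, and it cannot be done at all.

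\begin{itemize}
\item Take $f_n(z)=nz+(n-1)\bar z$. Then $h_n=nz$, $g_n=(n-1)z$, $J_{f_n}=2n-1>0$, and $f_n(x+iy)=(2n-1)x+iy$.
\item Each $f_n$ maps $\mathbb{D}$ into the strip $|\mathrm{Im}\,w|<1$. For $E=\{2i,3i\}$ we get $f_n^{-1}(E)=\emptyset$, so both hypotheses hold vacuously.
\item Yet $f_n^\#(0)=2n-1\to\infty$, so $\mathcal{F}$ is not $\varphi$-normal even on $K=\{0\}$.
\item The rescaling at $0$ is $f_n(\eta/(2n-1))=\mathrm{Re}\,\eta+i\,\mathrm{Im}\,\eta/(2n-1)\to\mathrm{Re}\,\eta=\eta/2+\overline{\eta/2}$. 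This is exactly your degenerate case $c=1$.
\item The limit satisfies $F^\#(\eta)=1/(1+(\mathrm{Re}\,\eta)^2)\le F^\#(0)=1$, so the normalization does not exclude it.
\item Here $F^{-1}(E)$ and $f_n^{-1}(E)$ are both empty, so your Steps 2--3 have nothing to act on. The Hurwitz step (Lemma~\ref{hurwitz}) is unavailable in any case, because the limit is not sense-preserving.
\end{itemize}
The same family with three test values in $\{|\mathrm{Im}\,w|\ge1\}$ also contradicts the cited three-point family result.

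The paper's proof has the same hole. It asserts that the limit produced by Lemma~\ref{zalcmann} is sense-preserving, but that lemma only gives a non-constant harmonic limit. A locally uniform limit of sense-preserving maps can have $J_F\equiv0$.

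What is missing is a hypothesis that keeps the dilatations away from the unit circle, for example $\sup_{f\in\mathcal{F}}\sup_{\mathbb{D}}|g'/h'|\le k<1$. With that assumption:
\begin{itemize}
\item the limit has $|c|\le k<1$;
\item $F=L\circ H+b$ with $L(w)=w+\bar c\bar w$ an orientation-preserving real-linear isomorphism;
\item each $E$-point of $F$ is an isolated zero of $H-L^{-1}(e-b)$ with positive winding number, so the argument-principle Hurwitz step applies;
\item your Steps 2--3 then go through verbatim.
\end{itemize}
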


Bharti and Thin~\cite[Thoerem 1.10]{bharatithin} established that, 
under suitable conditions, the test values in Result~\ref{Bharatik2} 
can be reduced from $k+4$ to $[k/2]+4$,  where $[x]$ 
denotes the greatest integer less than or equal to $x$.
 \begin{result}\label{bharti3}\cite[Theorem 1.10]{bharatithin}
   Let $\varphi$ be a smoothly increasing function, and
$k$ be a positive integer. Suppose $\mathcal{F}$ is a family 
of sense-preserving harmonic mappings in $\mathbb{D}$ such that
\begin{equation}
\sup\left\{
|f^{(i)}(z)|:
z\in f^{-1}(\{0\}),\, f \in \mathcal{F}, \;
i=0,\ldots,k-1
\right\}<\infty.
\end{equation}

If there exists a set $E$ of $\left[\frac{k}{2}\right]+4$
 distinct points in $\mathbb{C}$ such that
\begin{equation}
\sup\left\{
\frac{f^{\#(k)}(z)}{\varphi(|z|)^k}
:
z\in f^{-1}(E), \, f \in \mathcal{F}
\right\}
<\infty,
\end{equation}
and
\begin{equation}
\sup\left\{
\frac{|h^{(k+1)}(z)|+|g^{(k+1)}(z)|}
{1+\left(|h^{(k)}(z)|+|g^{(k)}(z)|\right)^{k+1}}
:
z\in f^{-1}(E)
\right\}
<\infty,
\end{equation}
then $f$ is a $\varphi$-normal harmonic mapping.
 \end{result}
 
The Results~\ref{Bharatik2} and~\ref{bharti3} suggest the following 
problem: \smallskip

$\bullet$ {\it Can the cardinality of the test set $E$ in 
Theorem~\ref{Mainfunction} be further reduced by imposing 
an additional differential condition?}
Motivated by this question, we establish the following 
criterion, in which the number of test values is 
reduced from $k+2$ to $[k/2]+2$.

\begin{theorem}\label{K: twovaluefunction}
Let $\varphi$ be a smoothly increasing function, and
$k$ be a positive integer. Suppose $f$ is sense-preserving 
harmonic mapping in $\mathbb{D}$ such that
\begin{equation}\label{eq: K: 1twovaluefunction}
\sup\left\{
|f^{(i)}(z)|:
z\in f^{-1}(\{0\}),\;
i=0,\ldots,k-1
\right\}<\infty.
\end{equation}

If there exists a set $E$ of $\left[\frac{k}{2}\right]+2$ 
distinct points in $\mathbb{C}$ such that
\begin{equation}\label{eq: K: 2twovaluefunction}
\sup\left\{
\frac{f^{\#(k)}(z)}{\varphi(|z|)^k}
:
z\in f^{-1}(E)
\right\}
<\infty,
\end{equation}
and
\begin{equation}\label{eq: K: 3twovaluefunction}
\sup\left\{
\frac{|h^{(k+1)}(z)|+|g^{(k+1)}(z)|}
{1+\left(|h^{(k)}(z)|+|g^{(k)}(z)|\right)^{k+1}}
:
z\in f^{-1}(E)
\right\}
<\infty,
\end{equation}
then $f$ is a $\varphi$-normal harmonic mapping.
\end{theorem}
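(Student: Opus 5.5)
We argue by contradiction with a Zalcman-type rescaling adapted to $\varphi$-normality, as in the proofs of Result~\ref{threevaluefunction} and Theorem~\ref{Mainfunction}. If $f$ is not $\varphi$-normal, the harmonic analogue of the Lohwater--Pommerenke/Zalcman lemma for $\varphi$-normal functions (used in \cite{Bohra,charakbharti}) applies. It gives points $a_n\in\mathbb{D}$ with $|a_n|\to1$, radii $\rho_n\to0$ with $\rho_n\varphi(|a_n|)\to0$, and rescaled maps
\[
F_n(\zeta)=f\Big(a_n+\frac{\rho_n\zeta}{\varphi(|a_n|)}\Big)=H_n(\zeta)+\overline{G_n(\zeta)} .
\]
These converge locally uniformly in $\mathbb{C}$, spherically, to a non-constant harmonic map $F=H+\overline{G}$ of finite order with $F^\#\le F^\#(0)=1$. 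Since $f$ is sense-preserving, $|G_n'|<|H_n'|$, so $G_n'=\omega_n H_n'$ with $|\omega_n|<1$. Passing to the limit, $G'=\omega H'$ with $\omega$ a unimodular constant or $|\omega|<1$. The mapping $F$ is sense-preserving or reduces to $F=H+\overline{\omega H}+c$, and in either case $H$ is a non-constant entire function of finite order. Where hypothesis \eqref{eq: K: 1twovaluefunction} forces it, we rescale instead with exponent $k$, i.e.\ consider $\rho_n^{-k}$-normalised functions. Then the bounded derivatives on $f^{-1}(\{0\})$ make the zeros of $F$ of multiplicity at least $k$. This is exactly the role the hypothesis plays in Theorem~\ref{Mainfunction}.

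The next step transfers the two hypotheses to $F$. Take $w_0\in E$ and $\zeta_0$ with $F(\zeta_0)=w_0$. By Hurwitz's theorem for sense-preserving harmonic maps (the argument principle for harmonic functions), there are $\zeta_n\to\zeta_0$ with $F_n(\zeta_n)=w_0$. Set $z_n=a_n+\rho_n\zeta_n/\varphi(|a_n|)$. Then
\[
|H_n^{(k)}(\zeta_n)|+|G_n^{(k)}(\zeta_n)|=\Big(\frac{\rho_n}{\varphi(|a_n|)}\Big)^k\big(|h^{(k)}(z_n)|+|g^{(k)}(z_n)|\big).
\]
By \eqref{eq: K: 2twovaluefunction} and $\mathcal{R}_{a_n}\to1$, this tends to $0$. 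Hence $H^{(k)}(\zeta_0)=G^{(k)}(\zeta_0)=0$ on $F^{-1}(E)$.

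For \eqref{eq: K: 3twovaluefunction}, write $\lambda_n=\rho_n/\varphi(|a_n|)\to0$. Then
\[
|H_n^{(k+1)}|+|G_n^{(k+1)}| = \lambda_n^{k+1}\big(|h^{(k+1)}|+|g^{(k+1)}|\big) \le \lambda_n^{k+1}M\Big(1+\lambda_n^{-k(k+1)}\big(|H_n^{(k)}|+|G_n^{(k)}|\big)^{k+1}\Big).
\]
Since $|H_n^{(k)}|+|G_n^{(k)}|\to0$ at $\zeta_n$, the second term is of the form $\lambda_n^{(k+1)(1-k)}\cdot o(1)$. This is harmless for $k=1$, but needs care for $k\ge2$. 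I would treat it by first showing $\lambda_n^{-k}(|H_n^{(k)}|+|G_n^{(k)}|)(\zeta_n)=|h^{(k)}|+|g^{(k)}|$ at $z_n$. This quantity is $O(\varphi(|z_n|)^k)$ by \eqref{eq: K: 2twovaluefunction}. Combined with $\mathcal{R}$-normalisation and $\rho_n\to0$, the right-hand side should then tend to $0$, giving $H^{(k+1)}(\zeta_0)=G^{(k+1)}(\zeta_0)=0$. I expect this bookkeeping of powers of $\lambda_n$ against $\varphi(|a_n|)^{k}$ to be the main technical obstacle. If it fails, I would choose the rescaling exponent $\alpha\in(0,k)$ in the Zalcman lemma (with $-1<\alpha<k$ allowed) so that both scalings vanish.

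Finally, we count multiplicities to reach a contradiction. Each $w\in E$ is taken by $F$ only at points where $H^{(j)}=G^{(j)}=0$ for $j=k,k+1$. Hence every $w$-point of $F$ (equivalently of the analytic function $H+\omega H$ in the degenerate case, or of $H$ via the relation $G'=\omega H'$) has multiplicity at least $k+2$. In particular $\Theta(w)\ge 1-\frac{1}{k+2}$ for each of the $[k/2]+2$ values in $E$. For the zero value we have multiplicity at least $k$ from \eqref{eq: K: 1twovaluefunction}, if $0\notin E$. Milloux-type inequalities for $H^{(k)}$ (Nevanlinna's second main theorem applied to $H$ and $H^{(k)}$, as in Bharti--Thin's proof of Result~\ref{bharti3}) then give
\[
\Big(\Big[\tfrac k2\Big]+2\Big)\Big(1-\tfrac{1}{k+2}\Big)+\Big(1-\tfrac1{k+1}\Big)>2 ,
\]
which shows that $H$ is constant. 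This contradicts the non-constancy of $F$. Polynomial or rational exceptions are excluded separately by direct degree counting, using $F^\#\le1$.
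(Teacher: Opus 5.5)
You have two genuine gaps, one in the final count and one in transferring \eqref{eq: K: 3twovaluefunction}.

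\textbf{The final count.} It rests on a false inference. At a point $\zeta_0$ with $F(\zeta_0)=w\in E$ you only know $H^{(k)}(\zeta_0)=H^{(k+1)}(\zeta_0)=0$. Nothing forces $H'(\zeta_0),\dots,H^{(k-1)}(\zeta_0)$ to vanish. So for $k\ge2$ the $w^*$-points of $H$ need not have multiplicity $\ge k+2$, and the bounds $\Theta(w)\ge 1-\frac{1}{k+2}$ are unfounded. The remaining numbers are also off. Multiplicity $\ge k$ at zeros gives $1-\frac1k$, not $1-\frac{1}{k+1}$. For entire $H$ the relevant bound is $\sum_{a\in\mathbb{C}}\Theta(a,H)\le 1$; measured against your threshold $2$, your left side at $k=1$ is only $\frac{11}{6}$. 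Given Claim 3, the paper counts through $H^{(k)}$ instead: each $a^*$-point of $H$ is a zero of $H^{(k)}\not\equiv 0$ of multiplicity at least $2$, so
\[
\sum_{a^*\in E^*}\overline N\Bigl(r,\frac{1}{H-a^*}\Bigr)\le \frac12 N\Bigl(r,\frac{1}{H^{(k)}}\Bigr)\le \frac12 T(r,H)+S(r,H).
\]
The second fundamental theorem on $E^*\cup\{\infty\}$ then yields $\bigl([k/2]+\frac12\bigr)T(r,H)\le S(r,H)$.

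\textbf{Transferring \eqref{eq: K: 3twovaluefunction}.} Your worry here is justified, and neither proposed fix works. Hypothesis \eqref{eq: K: 2twovaluefunction} gives $A_n=|h^{(k)}(z_n)|+|g^{(k)}(z_n)|\le C\varphi(|z_n|)^k$, and then the best available bound is
\[
\lambda_n^{k+1}A_n^{k+1}\le C^{k+1}\rho_n^{k+1}\Bigl(\frac{\varphi(|z_n|)}{\varphi(|a_n|)}\Bigr)^{k+1}\varphi(|z_n|)^{k^2-1}.
\]
This need not tend to $0$ for $k\ge 2$. The rescaling lemma gives only $\rho_n\to0$, not your $\rho_n\varphi(|a_n|)\to0$. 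Claim 2 gives $\lambda_n^kA_n\to0$, which is weaker than the needed $\lambda_nA_n\to0$.

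Changing the normalisation exponent, including your $\rho_n^{-k}$-normalisation, destroys the transfer itself. If $F_n=\lambda_n^{-\alpha}f(a_n+\lambda_n\zeta)$ with $\alpha\neq0$, then $F_n(\zeta_n)=w\in E$ means $f(z_n)=\lambda_n^{\alpha}w$, which is in general not in $E$. So the hypotheses on $f^{-1}(E)$ become unavailable. Claim 1 needs no renormalisation anyway, since $F_n^{(i)}(\zeta_n)=\lambda_n^i f^{(i)}(z_n)\to0$.

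The paper's displayed estimate closes only because it bounds $|h^{(k)}(\omega_n)|+|g^{(k)}(\omega_n)|$ by $\varphi(|\omega_n|)$ rather than $\varphi(|\omega_n|)^k$. It therefore faces the same obstruction for $k\ge2$.

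\textbf{Repair.} Claim 3 is needed only for $k=1$. There the bound is $\rho_n^2\bigl(\varphi(|z_n|)/\varphi(|a_n|)\bigr)^2\to0$, so every $w$-point of $F$ with $w\in E$ has multiplicity $\ge 3$, contradicting Lemma~\ref{twovalueharmonic}. For $k\ge 2$ one has $[k/2]+2\ge 3$. Claims 1 and 2 alone then give $\sum_{a^*}\overline N(r,1/(H-a^*))\le \overline N(r,1/H^{(k)})\le T(r,H)+S(r,H)$, hence $(|E|-2)T(r,H)\le S(r,H)$, exactly as in the proof of Theorem~\ref{Mainfunction}.
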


The preceding Theorem~\ref{K: twovaluefunction}, estblished for 
individual harmonic mappings, naturally suggests {\it an extension 
to families of harmonic mappings by imposing the corresponding
differential bounds uniformly over the family.} This motivates
the following result for families of harmonic mappings.
\begin{theorem}\label{K: twovaluefamily}
Let $\varphi$ be a smoothly increasing function, and
$k$ be a positive integer. Suppose $\mathcal{F}$ is a family 
of sense preserving harmonic mappings in $\mathbb{D}$ such that
\begin{equation}\label{eq:K: 1twovaluefunction}
\sup\left\{
|f^{(i)}(z)|:
z\in f^{-1}(\{0\}),\, f \in \mathcal{F}, \;
i=0,\ldots,k-1
\right\}<\infty.
\end{equation}
If there exists a set $E$ of $\left[\frac{k}{2}\right]+2$
 distinct points in $\mathbb{C}$ such that
\begin{equation}\label{eq:K: 2twovaluefamily}
\sup\left\{
\frac{f^{\#(k)}(z)}{\varphi(|z|)^k}
:
z\in f^{-1}(E), \, f \in \mathcal{F}
\right\}
<\infty,
\end{equation}
and
\begin{equation}\label{eq:K: 3twovaluefamily}
\sup\left\{
\frac{|h^{(k+1)}(z)|+|g^{(k+1)}(z)|}
{1+\left(|h^{(k)}(z)|+|g^{(k)}(z)|\right)^{k+1}}
:
z\in f^{-1}(E), \, f \in \mathcal{F}
\right\}
<\infty,
\end{equation}
then $\mathcal{F}$ is a $\varphi$-normal family.
\end{theorem}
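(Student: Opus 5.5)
We argue by contradiction, using a Zalcman--Pang type rescaling adapted to $\varphi$-normal families of harmonic mappings. Suppose $\mathcal{F}$ is not $\varphi$-normal. Then there is a compact set $K\subset\mathbb{D}$, a sequence $f_n=h_n+\overline{g_n}\in\mathcal{F}$ and points $z_n\in K$ with
\[
\frac{f_n^\#(z_n)}{\varphi(|z_n|)}\to\infty .
\]
Following the rescaling scheme used by Charak et al.~\cite{charakbharti} and Bharti--Thin~\cite{bharatithin}, with exponent $\alpha=0$, we choose points $a_n$ (which we may assume converge in $\overline{K'}$ for a slightly larger compact set $K'$) and radii $\rho_n\to 0$ with $\rho_n\varphi(|a_n|)\to 0$. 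The rescaled maps
\[
F_n(\zeta)=f_n\bigl(a_n+\rho_n\zeta/\varphi(|a_n|)\bigr)
\]
then converge locally uniformly on $\mathbb{C}$, with respect to the spherical metric, to a non-constant harmonic mapping $F=H+\overline{G}$ satisfying $F^\#(\zeta)\le F^\#(0)=1$. Here the smooth-increase property $\mathcal{R}_a\to1$ is what normalises $\varphi$ along the rescaling. Since each $f_n$ is sense-preserving, $|g_n'|<|h_n'|$, and the limit is either sense-preserving or has $G'\equiv \lambda H'$ with $|\lambda|\le1$. The standard argument in~\cite{charakbharti} allows us to treat $F$ essentially as $H+\overline{\lambda H}$ with $H$ entire, of order at most $2$, and non-constant.

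Next we transfer the hypotheses to the limit. At a point $\zeta_0$ with $F(\zeta_0)=e\in E$, Hurwitz's theorem (applied to $h_n-e+\overline{g_n}$, which is legitimate for sense-preserving maps via the argument principle for harmonic maps) gives $\zeta_n\to\zeta_0$ with $F_n(\zeta_n)=e$. By the chain rule,
\[
\frac{|H^{(k)}(\zeta_n)|+|G^{(k)}(\zeta_n)|}{1+|e|^{k+1}}
\approx \Bigl(\frac{\rho_n}{\varphi(|a_n|)}\Bigr)^{k}\,\varphi(|w_n|)^k\cdot\frac{f_n^{\#(k)}(w_n)}{\varphi(|w_n|)^k},
\]
where $w_n=a_n+\rho_n\zeta_n/\varphi(|a_n|)$. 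Hypothesis \eqref{eq:K: 2twovaluefamily}, together with $\varphi(|w_n|)/\varphi(|a_n|)\to1$ and $\rho_n\to0$, forces $H^{(k)}(\zeta_0)=G^{(k)}(\zeta_0)=0$ on $F^{-1}(E)$. Similarly, the $(k+1)$-st derivatives pick up a factor $(\rho_n/\varphi)^{k+1}$ in the numerator of \eqref{eq:K: 3twovaluefamily}, while the denominator $1+(|h_n^{(k)}|+|g_n^{(k)}|)^{k+1}$ stays bounded along the rescaling, because the $k$-th derivatives at preimages of $E$ are themselves of size $O(\varphi^k)$. Hence $H^{(k+1)}(\zeta_0)=G^{(k+1)}(\zeta_0)=0$ as well. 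Thus every $e$-point of $F$ with $e\in E$ is a zero of $H^{(k)}$ of multiplicity at least $2$. Hypothesis \eqref{eq:K: 1twovaluefunction} is used, as in~\cite{bharatithin}, to rule out the degenerate case where the rescaled zeros would produce a polynomial-type limit of degree less than $k$, and to guarantee the correct growth for the final counting argument.

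Finally, we derive a contradiction by value distribution for $H$ (or for $\Phi=H+\overline{\lambda H}$, reduced to the analytic function $H+\bar\lambda\cdots$ as in~\cite{charakbharti}). Each $e$-point of $F$ for $e\in E$ is a point where $H-\tilde e$ has multiplicity at least $k+2$: the vanishing of $H',\dots,H^{(k+1)}$ is inherited up to order $k+1$ once we know $F^{-1}(E)$ points are multiple, using the sense-preserving structure. A Milloux/Nevanlinna-type inequality for $H^{(k)}$ then gives
\[
\sum_{e\in E}\overline N\bigl(r,1/(H-\tilde e)\bigr)\le \frac{1}{k+2}\sum_{e\in E} N\bigl(r,1/(H-\tilde e)\bigr),
\]
which, combined with the second main theorem $(q-1)T(r,H)\le\sum\overline N+S(r,H)$ for $q=[k/2]+2$ values (plus $\infty$, which is Picard-exceptional for the entire function $H$), yields
\[
\Bigl(q-1-\frac{q}{k+2}\Bigr)T\le S.
\]
Since $q=[k/2]+2$ we have $q-1-q/(k+2)>0$, which is a contradiction. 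The main obstacle is precisely this counting step. One needs to justify that the double vanishing of $H^{(k)}$ (rather than the vanishing of $H'$) produces enough ramification, which is where the $[k/2]$ arises: zeros of $H^{(k)}$ of multiplicity at least $2$ at $E$-points are counted through $N(r,1/H^{(k)})\le N(r,1/H)+k\overline N(r,1/H)+S$. We would first attempt to adapt the proof of~\cite[Theorem 1.10]{bharatithin} verbatim, replacing its $[k/2]+4$ count by the sharper bound available because $H$ is entire (so $\infty$ costs nothing and the harmonic conjugate adds no poles).
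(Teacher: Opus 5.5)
Your rescaling and the transfer of the hypotheses to the limit are fine and agree with the paper's Claims~2 and~3. Because the points $a_n$ stay in a compact subset of $\mathbb{D}$, $\varphi$ is bounded along the sequence. So the bound $M_2(\rho_n/\varphi(|a_n|))^{k+1}\bigl(1+(C\varphi(|w_n|)^k)^{k+1}\bigr)$ tends to $0$, and $H^{(k)}=G^{(k)}=H^{(k+1)}=G^{(k+1)}=0$ on $F^{-1}(E)$. Using the zero hypothesis to exclude $H^{(k)}\equiv0$ is also the paper's Claim~1, but you should prove it rather than gesture at it. At a zero $\zeta_0$ of $F$, Hurwitz gives zeros $\zeta_n$ of $F_n$ with $|F_n^{(i)}(\zeta_n)|\le M(\rho_n/\varphi(|a_n|))^i\to0$. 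Hence $F^{(i)}(\zeta_0)=0$, and since $G'=cH'$ with $|c|<1$, $H^{(i)}(\zeta_0)=0$ for $1\le i\le k-1$. So $H$ cannot be a non-constant polynomial of degree $<k$.

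The genuine gap is the counting step. You assert that every $e$-point of $F$ ($e\in E$) is a zero of $H-\tilde e$ of multiplicity at least $k+2$, but nothing supports this: no hypothesis makes $H',\dots,H^{(k-1)}$ vanish at $E$-points. The zero hypothesis concerns only the value $0$, and the other two hypotheses give only vanishing of the $k$-th and $(k+1)$-st derivatives. So $\sum\overline N\le\frac{1}{k+2}\sum N$ is unfounded, and, as you concede, no contradiction is reached.

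The missing idea is to count the $E^*$-points of $H$ as zeros of $H^{(k)}$, not as multiple points of $H-a^*$. The $a^*$-point sets are pairwise disjoint, and $H^{(k)}$ vanishes to order at least two at each of them. Therefore
\[
\sum_{a^*\in E^*}\overline N\Bigl(r,\frac{1}{H-a^*}\Bigr)\le\frac12 N\Bigl(r,\frac{1}{H^{(k)}}\Bigr)\le\frac12 T(r,H^{(k)})+O(1)\le\frac12 T(r,H)+S(r,H).
\]
The last step is $m(r,H^{(k)})\le m(r,H)+m(r,H^{(k)}/H)$ for entire $H$; this is where $H^{(k)}\not\equiv0$ is needed. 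The second fundamental theorem for the $q=[k/2]+2$ values of $E^*$ together with $\infty$ then gives $(q-1)T(r,H)\le\frac12T(r,H)+S(r,H)$. That is, $([k/2]+\frac12)T(r,H)\le S(r,H)$, a contradiction. No delicate ramification bookkeeping is involved: the count only uses $q\ge2$, so the $[k/2]$ does not arise where you expect.

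Separately, your allowance $|\lambda|=1$ is not covered by the reduction $F=a\iff H=a^*$, which divides by $1-|\lambda|^2$. The paper works with a sense-preserving limit ($|c|<1$), and you would need to do the same or treat that case separately.
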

 
 Normality criteria involving spherical derivatives and
 differential expressions have been studied extensively;
 see, for example, \cite{chenlappan,Tanthin}. In view of these
results, we investigate a polynomial extension of Result~\ref{threevaluefunction}.
More precisely, we ask whether $f^\#$ can be replaced by a polynomial $P(f^\#)$
while preserving the conclusion of $\varphi$-normality. This is addressed in
the following theorem.
\begin{theorem}\label{threevaluepoly}
Let $f$ be a sense-preserving harmonic mapping in $\mathbb{D}$ and
 let $P(z)$ be a non-constant polynomial of degree $m \geq 1$ such 
 that
  the coefficients of $P(z)$ are non-negative real numbers. If there 
  exists a set $E$ of three distinct points in $\mathbb{C}$ such that
\begin{equation}\label{eq: threevaluepoly}
\sup \left\{
\frac{P\!\left(f^{\#}(z)\right)}{\varphi(|z|)}
: z \in f^{-1}(E)
\right\}
< \infty,
\end{equation}
then $f$ is a $\varphi$-normal harmonic mapping.
\end{theorem}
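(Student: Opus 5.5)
The plan is to reduce the statement to Result~\ref{threevaluefunction}. Write $P(x)=\sum_{j=0}^m a_j x^j$ with every $a_j\ge 0$ and $a_m>0$. The hypothesis \eqref{eq: threevaluepoly} supplies a constant $M$ with $P(f^\#(z))\le M\varphi(|z|)$ for all $z\in f^{-1}(E)$.

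Since all coefficients are non-negative, each summand of $P(f^\#(z))$ is non-negative. Hence for every $z$ in $\mathbb{D}$ we have $a_m (f^\#(z))^m\le P(f^\#(z))$, and also $a_1 f^\#(z)\le P(f^\#(z))$ when $a_1>0$. On $f^{-1}(E)$ this gives
\[
f^\#(z)\le \left(\frac{M\varphi(|z|)}{a_m}\right)^{1/m}.
\]
I then split according to the size of $f^\#$. At points where $f^\#(z)\le 1$ we get $f^\#(z)/\varphi(|z|)\le 1/\varphi(|z|)\le 1/\varphi(0)$, because $\varphi$ is increasing and positive. At points where $f^\#(z)>1$ we get $f^\#(z)\le (f^\#(z))^m\le M\varphi(|z|)/a_m$. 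In both cases
\[
\frac{f^\#(z)}{\varphi(|z|)}\le \max\left\{\frac{1}{\varphi(0)},\frac{M}{a_m}\right\}
\]
for all $z\in f^{-1}(E)$. This holds for every $m\ge1$ and uses only the leading coefficient together with non-negativity of the others.

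If $f$ is non-constant, the displayed bound is exactly the hypothesis of Result~\ref{threevaluefunction} for the same three-point set $E$, and that result gives that $f$ is $\varphi$-normal. If $f$ is constant, then $f^\#\equiv 0$ and $f$ is trivially $\varphi$-normal.

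The main obstacle I anticipate is checking that Result~\ref{threevaluefunction} applies as stated. In particular it requires $\varphi$ to be continuous and smoothly increasing, which is implicit in the standing convention for $\varphi$ in this theorem. The only other subtle point is the case split at $f^\#=1$: it is needed because, when $m>1$, the bound $(f^\#)^m\lesssim\varphi$ controls $f^\#$ only by $\varphi^{1/m}$, and this is weaker than control by $\varphi$ only where $\varphi$ is small. The case split, combined with $\varphi(r)\ge\varphi(0)>0$, handles this.

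Should one prefer a self-contained argument, I would instead argue by contradiction as in the proof of Result~\ref{threevaluefunction}. Using the harmonic Zalcman-type rescaling adapted to $\varphi$, one obtains $g_n(\zeta)=f(a_n+\zeta/\varphi(|a_n|))$ converging to a non-constant harmonic limit with bounded spherical derivative. The limit $F$ must satisfy $F^\#=0$ on $F^{-1}(E)$, because
\[
\frac{f^\#}{\varphi}\to 0
\]
on preimages of $E$ under the rescaling, by the estimate above together with $\varphi(|a_n|)\to\infty$. This contradicts the three-value argument. Either route rests on the same reduction inequality.
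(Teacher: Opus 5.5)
Your main argument is correct, but it takes a different route from the paper.

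\textbf{The paper's route.} The paper does not invoke Result~\ref{threevaluefunction}. It reruns the contradiction argument directly:
\begin{enumerate}[label=(\arabic*)]
\item it rescales a non-$\varphi$-normal $f$ to a non-constant sense-preserving limit $F$;
\item it uses Lemma~\ref{threevaluelemma} to pick $a\in E$ such that $F-a$ has a simple zero $\zeta_0$, so $F^\#(\zeta_0)>0$;
\item using $\varphi(r)(1-r)\ge 1$, it shows $(f^\#(w_n))^t/\varphi(|w_n|)\to\infty$ for every $t\ge1$ along $w_n\in f^{-1}(a)$, hence $P(f^\#(w_n))/\varphi(|w_n|)\to\infty$.
\end{enumerate}

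\textbf{Your route.} You prove the pointwise bound $f^\#/\varphi\le\max\{1/\varphi(0),M/a_m\}$ on $f^{-1}(E)$ and then apply Result~\ref{threevaluefunction} as a black box. The case split at $f^\#=1$ is fine. Alternatively, $\varphi\ge1$ follows from $\varphi(r)(1-r)\ge1$, and then $(M\varphi/a_m)^{1/m}\le\max\{1,M/a_m\}\,\varphi$ directly.

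\textbf{Comparison.} Your route is shorter and modular. It makes explicit that the theorem is a formal corollary of Result~\ref{threevaluefunction}. The polynomial hypothesis implies the linear one, while the reverse bound $f^\#\le C\varphi$ only yields $(f^\#)^m\le C^m\varphi^m$. The same trick would transfer verbatim to polynomial versions of the other criteria in the paper. The paper's route is self-contained modulo the rescaling lemma and Lemma~\ref{threevaluelemma}, and it exhibits the blow-up mechanism on preimages of $E$ explicitly. However, it essentially reproves Result~\ref{threevaluefunction} with powers inserted. Your remark about the constant case is vacuous, since sense-preserving forces $h'\neq0$, but it is harmless.

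\textbf{The optional self-contained sketch.} This part is garbled, though your main proof does not depend on it. The rescaling you wrote omits the factor $\rho_n\to0$ of Lemma~\ref{zalharfun}. Without it, non-$\varphi$-normality does not give you a non-constant limit of $f(a_n+\zeta/\varphi(|a_n|))$. Also, for $m=1$ your estimate only makes the rescaled spherical derivative bounded on preimages of $E$, not tending to $0$.

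With $\rho_n$ included, the vanishing of $F^\#$ on $F^{-1}(E)$ comes from the factor $\rho_n\,\varphi(|w_n|)/\varphi(|z_n|)\to0$. It does not come from $f^\#/\varphi\to0$, which is false since that ratio is merely bounded.
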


If we take $P(z)=z$, then $P(f^\#(z))=f^\#(z)$ then
 Result~\ref{threevaluefunction} is recovered as a special case.

\section{Some Essential lemmas}

In this section, we recall some known results and establish a
few auxiliary lemmas that will be used in the proofs of our
main theorems.
\smallskip

We first recall the following rescaling lemma, which provides a useful characterization of
$\varphi$-normality. Roughly speaking, it says that the failure of
$\varphi$-normality can be detected through a suitable rescaling around a
sequence of points, which yields a non-constant harmonic mapping on
$\mathbb{C}$ as a locally uniform limit.

\begin{lemma}\label{zalharfun}\cite[Theorem 1.3]{Bohra}
  A non-constant mapping $f$ in $\mathbb{D}$ is
   $\varphi$-normal if and only if there do not exist sequences
   $\{z_n\}\subset\mathbb{D}$, and $\rho_n >0$ with $\rho_n \to 0$ 
   as $n\to \infty$ such that
   \[
   \lim_{n\to\infty}f\left(z_n+\frac{\rho_n\zeta}{\varphi(|z_n|)}\right)=F(\zeta)
   \]
   locally uniformly in $\mathbb{C}$, where $F$ is a non-constant harmonic mapping.
  
  \end{lemma}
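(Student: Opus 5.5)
We would prove the two implications separately, following the Zalcman--Pang rescaling argument as adapted by Aulaskari, Makhmutov and R\"aty\"a to the $\varphi$-setting. We use the harmonic spherical derivative \eqref{eq:sphderhar} throughout. We also use the fact that a family of harmonic mappings with locally uniformly bounded $f^\#$ is normal, i.e.\ the Marty-type criterion for harmonic mappings of Arbel\'aez--Hern\'andez--Sierra. For a rescaled mapping $F_n(\zeta)=f\bigl(z_n+\rho_n\zeta/\varphi(|z_n|)\bigr)=h_n+\overline{g_n}$, the chain rule gives
\[
F_n^\#(\zeta)=\frac{\rho_n}{\varphi(|z_n|)}\,f^\#\!\left(z_n+\frac{\rho_n\zeta}{\varphi(|z_n|)}\right).
\]
The condition $\varphi(r)(1-r)\ge 1$ guarantees that $z_n+\rho_n\zeta/\varphi(|z_n|)\in\mathbb{D}$ for $|\zeta|<1/\rho_n$. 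Since $\rho_n\to0$, $F_n$ is therefore eventually defined on any fixed compact set.

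\emph{Necessity.} Suppose $f^\#(z)\le C\varphi(|z|)$ on $\mathbb{D}$ and that such sequences exist with $F_n\to F$ locally uniformly. Then
\[
F_n^\#(\zeta)\le C\rho_n\,\frac{\varphi\bigl(|z_n+\rho_n\zeta/\varphi(|z_n|)|\bigr)}{\varphi(|z_n|)} .
\]
If $|z_n|\to1$ along a subsequence, the last quotient equals $\mathcal{R}_{z_n}(\rho_n\zeta)\to1$ uniformly on compacts, using the smoothly increasing property. Otherwise $\varphi$ is bounded above and below on the relevant compact subset of $[0,1)$. In either case $F_n^\#\to0$ locally uniformly. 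Locally uniform convergence of harmonic maps gives $h_n'\to h'$ and $g_n'\to g'$ locally uniformly, after the usual normalisation of the co-analytic part. Hence $F^\#\equiv0$, so $F$ is constant, a contradiction.

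\emph{Sufficiency.} Suppose $f$ is not $\varphi$-normal, and pick $w_n$ with $M_n:=f^\#(w_n)/\varphi(|w_n|)\to\infty$. Fix $r\in(0,1)$ and consider the closed discs $\overline{D}_n=\{|z-w_n|\le r/\varphi(|w_n|)\}\subset\mathbb{D}$. On $\overline{D}_n$ we maximize
\[
\Phi_n(z)=\Bigl(r-\varphi(|w_n|)|z-w_n|\Bigr)\frac{f^\#(z)}{\varphi(|z|)}
\]
at some point $z_n$. Set $\rho_n^{-1}=\Phi_n(z_n)\big/\bigl(r-\varphi(|w_n|)|z_n-w_n|\bigr)=f^\#(z_n)/\varphi(|z_n|)$, suitably adjusted. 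Since $\Phi_n(z_n)\ge rM_n\to\infty$, we get $\rho_n\to0$. We then show that $\rho_n\varphi(|z_n|)/\varphi(|w_n|)$ is small compared with the distance $r/\varphi(|w_n|)-|z_n-w_n|$ to the boundary. This is where the ratio $\varphi(|z_n|)/\varphi(|w_n|)$ has to be controlled via $\mathcal{R}_a\to1$, which gives the comparison for $|w_n|\to1$; the case where $|w_n|$ stays away from $1$ is easier. Granting this, the maximality of $\Phi_n$ yields $F_n^\#(\zeta)\le 1+o(1)$ on each compact set, together with $F_n^\#(0)=1$. The normal-family criterion then gives a subsequence converging locally uniformly to a harmonic $F$ or to $\infty$. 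To exclude $\infty$ we argue that $F_n\to\infty$ uniformly near $0$ would force $|h_n'|+|g_n'|=o(|F_n|^2)$ there, which is incompatible with $F_n^\#(0)=1$. Finally $F^\#(0)=1$, so $F$ is non-constant.

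The main obstacle is the last part of the sufficiency direction. One must choose the weight in $\Phi_n$ so that the rescaled spherical derivatives are uniformly bounded on every compact subset of $\mathbb{C}$; this is exactly where the uniform convergence $\mathcal{R}_a\to1$ enters. One must also verify, for harmonic rather than meromorphic maps, that the limit cannot be the constant $\infty$. For the latter we would first try to reduce to the analytic parts $h_n$ and $g_n$, which are separately normal thanks to the bound on $|h_n'|+|g_n'|$ relative to $1+|F_n|^2$.
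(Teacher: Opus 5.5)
The paper gives no proof of this lemma; it quotes it from Bohra et al. So your proposal has to stand on its own. Your necessity direction is correct. So is the Zalcman-type selection yielding $F_n^\#(0)=1$ and $F_n^\#\le 1+o(1)$ on compacta. Incidentally, $M_n\to\infty$ already forces $|w_n|\to1$, since $f^\#$ is bounded on compact subsets of $\mathbb{D}$, so the case "$|w_n|$ stays away from $1$" never arises.

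\textbf{The gap: excluding the limit $\infty$.} Your claim that $F_n\to\infty$ locally uniformly forces $|h_n'|+|g_n'|=o(|F_n|^2)$ is the meromorphic argument $(1/F_n)^\#=F_n^\#$. It has no harmonic analogue, because $1/F_n$ is not harmonic; this is also why the converse of the harmonic Marty criterion fails.
\begin{itemize}
\item The maps $F_n(\zeta)=n+i(1+n^2)\,\mathrm{Re}\,\zeta$ satisfy $F_n^\#(\zeta)=1/\bigl(1+(1+n^2)(\mathrm{Re}\,\zeta)^2\bigr)\le F_n^\#(0)=1$, yet $|F_n|\ge n$.
\item Fix $0<\delta<1$ and take $F_n(\zeta)=n+i(1+n^2-\delta)\,\mathrm{Re}\,\zeta+i\delta\zeta$, with $h_n=n+i\tfrac{1+n^2+\delta}{2}\zeta$ and $g_n=-i\tfrac{1+n^2-\delta}{2}\zeta$. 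These are sense-preserving, satisfy $F_n^\#(0)=1$ and $\sup_{|\zeta|\le R}F_n^\#\le (1+n^2)/\bigl(1+(n-\delta R)^2\bigr)\to1$, and still $F_n\to\infty$ locally uniformly.
\end{itemize}
So everything your selection guarantees is compatible with $F\equiv\infty$. Your fallback through the analytic parts fails as well. The bound on $|h_n'|+|g_n'|$ relative to $1+|F_n|^2$ gives no control of $h_n^\#$, and in both examples $h_n$ has zeros tending to $0$ at which $h_n^\#\to\infty$.

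\textbf{Why the gap cannot be closed locally.} For $f$ merely harmonic, as in the statement, the sufficiency direction is false. Suppose $f(z_n+\rho_n\zeta/\varphi(|z_n|))\to F$ with $F$ non-constant harmonic. Pick $\zeta_0$ with $F^\#(\zeta_0)>0$ and set $w_n=z_n+\rho_n\zeta_0/\varphi(|z_n|)$. If $|z_n|\not\to1$, then $F_n^\#(\zeta_0)\to0$, which is impossible; hence $\mathcal{R}_{z_n}(\rho_n\zeta_0)\to1$. This gives $|f(w_n)|\to|F(\zeta_0)|$ and $f^\#(w_n)/\varphi(|w_n|)\to\infty$. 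So a non-constant limit can exist only if $f^\#/\varphi$ is unbounded on some set $\{|f|\le K\}$.

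Now take the smoothly increasing $\varphi(r)=(1-r)^{-2}$, let $w=i(1+z)/(1-z)$, and set $f=\mathrm{Im}\,w+i\,\mathrm{Re}\,e^{-iw}$, that is, $h=\tfrac{w}{2i}+\tfrac{i}{2}e^{-iw}$ and $g=\tfrac{w}{2i}-\tfrac{i}{2}e^{-iw}$.
\begin{itemize}
\item We have $|f|\ge\mathrm{Im}\,w$ and $|h'|+|g'|\le|w'|(1+e^{\mathrm{Im}\,w})$ with $|w'|=2/|1-z|^2$. Hence on $\{|f|\le K\}$, $f^\#/\varphi\le 2(1+e^K)(1-|z|)^2/|1-z|^2\le 2(1+e^K)$.
\item At $w=\pi/2+iy$ we have $f=y$ and $|h'|+|g'|\ge|h'-g'|=|w'|e^{y}$. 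This gives $f^\#/\varphi\gtrsim e^{y}/y^2\to\infty$.
\end{itemize}
Thus $f$ is not $\varphi$-normal, yet it admits no rescaling with a non-constant harmonic limit.

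A correct version therefore needs an extra hypothesis that is used in an essential way; the paper only ever applies the lemma to sense-preserving $f$. The case $F\equiv\infty$ must then be ruled out by a global argument exploiting that hypothesis. The local normalisation $F_n^\#(0)=1$ cannot do it, even for sense-preserving $F_n$.
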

  
  We also recall the following normalized rescaling result for 
  $\varphi$-normal families for harmonic mappings.
\begin{lemma}\label{zalcmann}\cite[Theorem 1.11]{charakbharti}
Let $\varphi:[0,1)\to(0,\infty)$ be a continuous smoothly increasing
function and let $\beta\in(-1,\infty)$. If a family
$\mathcal{F}$ of non-constant harmonic mappings in $\mathbb{D}$ is
not $\varphi$-normal, then there exist sequences
$\{z_n\}\subset\mathbb{D}$,
$\{f_n\}\subset\mathcal{F}$ and
$\{\rho_n\}\subset(0,1)$, with
$\rho_n\to0$, such that
\[
F_n(\eta)
=
\left(
\frac{\rho_n}{\phi(|z_n|)}
\right)^{\beta}
f_n\!\left(
z_n+
\frac{\rho_n\eta}{\phi(|z_n|)}
\right)
\]
converge locally uniformly in $\mathbb{C}$ to a non-constant harmonic
mapping $F$ satisfying
\[
F^{\#}(\eta)\leq F^{\#}(0)=1.
\]
\end{lemma}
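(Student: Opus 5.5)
Note that the statement immediately above is Lemma~\ref{zalcmann}, which the paper imports from \cite[Theorem 1.11]{charakbharti}. I would prove it by a Zalcman--Pang type rescaling adapted to the $\varphi$-weight. Take the sequences in the form
\[
\{f_n\}\subset\mathcal{F},\qquad \{z_n\}\subset\mathbb{D},\qquad \rho_n\in(0,1),\ \rho_n\to0,
\]
with $F_n(\eta)=\bigl(\rho_n/\varphi(|z_n|)\bigr)^{\beta}f_n\bigl(z_n+\rho_n\eta/\varphi(|z_n|)\bigr)$. The main object is the weighted spherical derivative of this rescaling:
\[
F_n^{\#}(\eta)=\frac{(\rho_n/\varphi(|z_n|))^{1+\beta}\bigl(|h_n'(w)|+|g_n'(w)|\bigr)}{1+(\rho_n/\varphi(|z_n|))^{2\beta}|f_n(w)|^2},\qquad w=z_n+\frac{\rho_n\eta}{\varphi(|z_n|)} .
\]

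\emph{Step 1: choose the rescaling points.} Since $\mathcal{F}$ is not $\varphi$-normal, there is a compact set $K\subset\mathbb{D}$, say $K\subset\{|z|\le r_0\}$, and $f_n\in\mathcal{F}$, $w_n\in K$ with $f_n^{\#}(w_n)/\varphi(|w_n|)\to\infty$. Fix $r\in(r_0,1)$. For each $n$ I maximize over $|z|\le r$ and $t\in[0,1]$ the quantity
\[
M_n(z,t)=t^{1+\beta}\Bigl(1-\tfrac{|z|^2}{r^2}\Bigr)^{1+\beta}\,\frac{(|h_n'|+|g_n'|)(z)\,\varphi(|z|)^{-1-\beta}}{\varphi(|z|)^{-2\beta}t^{2\beta}+|f_n(z)|^2}.
\]
This is the standard Pang--Zalcman device. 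Its purpose is that, at $\rho=t$, the quantity reproduces the weighted spherical derivative of $F_n$ at $\eta=0$. Continuity gives a maximizer $(z_n,\rho_n)$, and the blow-up at $w_n$ forces the maximum value to be at least $1$. I then normalize so that $F_n^{\#}(0)=1$, using the intermediate value theorem in $t$. For $-1<\beta$ the map $t\mapsto M_n(z,t)$ increases from $0$, which is where the hypothesis $\beta>-1$ enters. The normalization together with $f_n^{\#}(w_n)/\varphi(|w_n|)\to\infty$ forces $\rho_n\to0$.

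\emph{Step 2: local bound and boundedness of $\varphi$.} Maximality gives, for $|\eta|<R$ and large $n$,
\[
F_n^{\#}(\eta)\le \frac{(1-|z_n|^2/r^2)^{1+\beta}}{(1-|w|^2/r^2)^{1+\beta}}\cdot\bigl(\mathcal{R}_{z_n}(\eta)\text{-type ratio}\bigr)^{\pm(1+\beta)}\to 1 .
\]
Here I use that $\rho_n|\eta|/\varphi(|z_n|)\to0$ compared with $r-|z_n|$. On $|z|\le r$ the function $\varphi$ is bounded above and below, so the ratio $\varphi(|w|)/\varphi(|z_n|)\to1$ by continuity. The smoothly increasing condition is needed only if $|z_n|\to1$, which cannot happen here.

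\emph{Step 3: convergence to a harmonic limit.} The $F_n$ have locally uniformly bounded spherical derivative, so by the Marty-type criterion for harmonic mappings they have a subsequence converging locally uniformly, in the spherical sense, to a limit $F$. The main obstacle is to exclude the limit $\infty$ and to show that the limit is a genuine non-constant harmonic mapping $F=H+\overline{G}$ with $F^{\#}(0)=1$ and $F^{\#}\le1$.

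To handle this I would use the fact that the analytic and co-analytic parts $h_n$, $g_n$ of $F_n$ satisfy Harnack/Schwarz-type derivative bounds. On a region where $|F_n|$ is bounded, the bound on $F_n^{\#}$ gives a bound on $|H_n'|+|G_n'|$, so derivatives converge and $F^{\#}$ passes to the limit. This yields $F^{\#}(0)=1$, hence $F$ is non-constant, and $F^{\#}\le1$ everywhere. If instead $F_n\to\infty$ locally uniformly, I would pass to $1/F_n$, or argue via the Jacobian-free identity $F_n^{\#}=(|H_n'|+|G_n'|)/(1+|F_n|^2)$ giving $F_n^{\#}\to0$, which contradicts $F_n^{\#}(0)=1$.
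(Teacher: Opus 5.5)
The paper gives no proof of this lemma; it only quotes it from Charak et al. Your argument therefore has to stand on its own, and it breaks exactly at the point you flag in Step~3: excluding the limit $\infty$.

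Neither of your two routes works for harmonic maps. The function $1/F_n$ is not harmonic, so there is no Weierstrass/Hurwitz argument for it. Moreover, $F_n\to\infty$ locally uniformly does \emph{not} force $F_n^{\#}\to0$, because $|H_n'|+|G_n'|$ can be as large as $|F_n|^2$ when $H_n$ and $\overline{G_n}$ nearly cancel. For example, $F_n(\eta)=n+i(1+n^2)\,\mathrm{Im}\,\eta$ has $F_n^{\#}(\eta)=1/\bigl(1+(1+n^2)(\mathrm{Im}\,\eta)^2\bigr)$. So $F_n^{\#}\le F_n^{\#}(0)=1$ and $F_n^{\#}\equiv1$ on $\mathbb{R}$, although $|F_n|\ge n$.

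This cannot be patched, because the statement itself fails for $\beta=0$, the only case the paper uses. Let $f_n(z)=n+z+in^3\,\mathrm{Im}\,z$, i.e.\ $h_n(z)=n+(1+n^3/2)z$ and $g_n(z)=-n^3z/2$. These maps are sense-preserving, and $f_n^{\#}(0)=(1+n^3)/(1+n^2)\to\infty$, so $\{f_n\}$ is not $\varphi$-normal (take $K=\{0\}$). But $\mathrm{Re}\,f_n>n-1$ on $\mathbb{D}$. Hence a rescaling $f_{m_n}\bigl(z_n+\rho_n\eta/\varphi(|z_n|)\bigr)$ can have a finite limit only if $\{m_n\}$ is bounded. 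In that case the limit is constant, since $\rho_n/\varphi(|z_n|)\le\rho_n\to0$ (recall $\varphi\ge1$) and each $f_m$ is Lipschitz. Replacing $n^3$ by $n^p$ with $2<p<(1+\beta)/\beta$ defeats every $\beta\in(0,1)$ in the same way, and $\beta\in(-1,0]$ goes exactly as $\beta=0$.

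The underlying point is that the converse of Marty's criterion fails for harmonic maps. This family tends uniformly to $\infty$, so it is normal in Montel's sense, yet its spherical derivatives are unbounded. No argument can close your Step~3 without changing the hypotheses. The same family, whose values avoid any fixed finite set once $n$ is large, also satisfies the hypotheses of Theorem~\ref{mainfamily} with $k=1$ and of Theorem~\ref{twovaluefamily} without being $\varphi$-normal.

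Step~1 has further errors, which are repairable.
\begin{itemize}
\item Your $M_n$ is not the spherical derivative at $0$ of the stated rescaling. The denominator $\varphi^{-2\beta}t^{2\beta}+|f_n|^2$ corresponds to multiplying by $(t/\varphi)^{-\beta}$. The rescaling $(t/\varphi)^{\beta}f_n$ instead requires $s^{1+\beta}(|h_n'|+|g_n'|)/(1+s^{2\beta}|f_n|^2)$, with $s=t(1-|z|^2/r^2)/\varphi(|z|)$ substituted throughout.
\item With your expression, at a zero of $f_n$ and $\beta>1$ the quantity behaves like $t^{1-\beta}\to\infty$ as $t\to0$.
\item In any case $t\mapsto t^{1+\beta}/(ct^{2\beta}+B)$ is increasing only for $\beta\le1$, so the monotonicity you invoke for all $\beta>-1$ is false.
\end{itemize}
Monotonicity is not needed anyway. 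The corrected quantity is at most $s^{1+\beta}\max_{|z|\le r}(|h_n'|+|g_n'|)$, so it tends to $0$ as $t\to0$ uniformly in $z$; this is where $\beta>-1$ and the absence of poles enter. Meanwhile its maximum over $|z|\le r$ at any fixed $t=\delta>0$ tends to $\infty$ as $n\to\infty$. So the first $t_n$ at which this maximum equals $1$ tends to $0$. You should then take $z_n$ maximizing at this fixed $t_n$. Maximizing jointly in $(z,t)$ and afterwards moving $t$ destroys the maximality that Step~2 needs. These repairs give $\rho_n\to0$, $F_n^{\#}(0)=1$ and $F_n^{\#}\le1+o(1)$ locally, but they do nothing against escape to $\infty$ in Step~3.
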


We shall also use the following Hurwitz-type result for
zeros of sense-preserving harmonic mappings.

\begin{lemma}\label{hurwitz}\cite[Para.3, p. 10 ]{Duren}
Let $\{f_n\}$ be a sequence of sense-preserving harmonic mappings in
$\mathbb{D}$ converging locally uniformly to a sense-preserving harmonic
mapping $f$. Then $z_0\in\mathbb{D}$ is a zero of $f$ if and only if
$z_0$ is a cluster point of the zeros of $f_n$, $n\geq 1$.
\end{lemma}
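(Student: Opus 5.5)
We prove the two implications separately: the converse direction uses only locally uniform convergence and continuity, and the direct direction uses the argument principle for sense-preserving harmonic mappings together with a Rouché-type stability of the winding number.

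\emph{If $z_0$ is a cluster point of zeros of the $f_n$, then $f(z_0)=0$.} Suppose there are indices $n_j\to\infty$ and points $z_j\in\mathbb{D}$ with $f_{n_j}(z_j)=0$ and $z_j\to z_0$. Choose a closed disc $\overline{\Delta}\subset\mathbb{D}$ about $z_0$ containing all $z_j$ for $j$ large. Then
\[
|f(z_0)|\le |f(z_0)-f(z_j)|+|f(z_j)-f_{n_j}(z_j)|.
\]
The first term tends to $0$ by continuity of $f$. The second tends to $0$ by uniform convergence on $\overline{\Delta}$. Hence $f(z_0)=0$.

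\emph{If $f(z_0)=0$, then $z_0$ is a cluster point of zeros of the $f_n$.}
\begin{enumerate}
\item[(a)] Since $f$ is sense-preserving, $J_f>0$, so $f$ is locally univalent by Lewy's theorem. In particular the zeros of $f$ are isolated. So there is $r_0>0$ such that $\overline{D(z_0,r)}\subset\mathbb{D}$ and $f\neq 0$ on $\overline{D(z_0,r)}\setminus\{z_0\}$ for every $0<r<r_0$.
\item[(b)] Fix such an $r$ and let $C_r=\partial D(z_0,r)$. Put $\delta=\min_{C_r}|f|>0$.
\item[(c)] By the argument principle for sense-preserving harmonic mappings (Duren, Ch.~2), the winding number of $f(C_r)$ about $0$ equals the sum of the orders of the zeros of $f$ inside $C_r$. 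Each such order is a positive integer because $f$ is sense-preserving. So the winding number is $\ge 1$.
\item[(d)] By uniform convergence on $C_r$, we have $|f_n-f|<\delta\le |f|$ on $C_r$ for all $n\ge N(r)$. For these $n$, the segment homotopy between $f(C_r)$ and $f_n(C_r)$ avoids $0$. Hence $f_n(C_r)$ also winds around $0$ at least once.
\item[(e)] Apply the same argument principle to the sense-preserving map $f_n$. A nonzero winding number forces $f_n$ to have a zero in $D(z_0,r)$ for every $n\ge N(r)$.
\item[(f)] Let $r\to 0$ along a sequence and pick the corresponding zeros by a diagonal choice. This gives zeros of the $f_n$ converging to $z_0$, so $z_0$ is a cluster point.
\end{enumerate}

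\emph{Main obstacle.} The key step is (c) and (e): the harmonic argument principle, i.e.\ that each zero of a sense-preserving harmonic map has positive order and these orders add up to the winding number. If I had to verify it, I would argue locally. Near a zero $a$, write $f=h+\overline{g}$ with $|g'|<|h'|$. When $h'(a)\neq0$, the map is a small perturbation of its nonsingular linear part, which gives index $1$. When $h$ has a zero of order $m$ at $a$, the dilatation bound $|g'|<|h'|$ shows that the anti-analytic part is dominated by the analytic part on small circles. The homotopy $h+t\overline{g}$, $t\in[0,1]$, then stays nonzero on those circles, so the index at $a$ is $m\ge1$. Additivity over the zeros inside $C_r$ then gives the winding-number identity.
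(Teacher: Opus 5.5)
Your proof is correct and follows the standard argument behind the result the paper only cites from Duren (the paper gives no proof of its own): the harmonic argument principle gives $f$ a nonzero winding number about $0$ on small circles around $z_0$, uniform convergence preserves that winding number, and a nonzero winding number forces a zero of $f_n$ inside; your diagonal choice in fact gives zeros $\zeta_n\to z_0$ of $f_n$ for all large $n$, which is the form the paper uses. One remark: step (a) gets isolated zeros from $J_f>0$, which matches the paper's stated definition, but the paper later applies the lemma to limits with $H'(\zeta_0)=G'(\zeta_0)=0$, so the weaker notion (analytic dilatation $\omega=g'/h'$ with $|\omega|<1$, as in Duren) is what is actually needed, and in that setting isolation should instead come from the domination of $g-g(a)$ by $h-h(a)$ on small circles that you sketch in your last paragraph.
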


We need some results from the value distribution theory of 
Nevanlinna for details, see~\cite{Lo2013}.
\begin{lemma}[First Fundamental Theorem]
Let $f$ be a meromorphic mapping in $\mathbb{C}$. Then for any
$a\in\mathbb{C}$, we have
\[
T\!\left(r,\frac{1}{f-a}\right)
=
T(r,f)+O(1).
\]
\end{lemma}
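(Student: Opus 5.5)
This is the classical First Fundamental Theorem of Nevanlinna, and I would prove it by applying Jensen's formula to $f-a$ and then comparing the resulting characteristic with $T(r,f)$. Throughout we assume $f\not\equiv a$ (otherwise $1/(f-a)$ is not defined). We use the standard notation
\[
T(r,f)=m(r,f)+N(r,f),\qquad
m(r,f)=\frac{1}{2\pi}\int_0^{2\pi}\log^+|f(re^{i\theta})|\,d\theta .
\]
Here $N(r,f)$ is the integrated counting function of the poles of $f$ in $|z|\le r$, with the usual correction term when $0$ is a pole.

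\textbf{Step 1 (Jensen's formula).} Put $F=f-a$, which is meromorphic and not identically zero in $\mathbb{C}$. Near the origin write $F(z)=c\,z^{p}+\cdots$ with $c\neq 0$ and $p\in\mathbb{Z}$. Jensen's formula (with the standard modification at the origin) gives
\[
\frac{1}{2\pi}\int_0^{2\pi}\log|F(re^{i\theta})|\,d\theta
= \log|c| + N\!\left(r,\tfrac1F\right)-N(r,F).
\]

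\textbf{Step 2 (splitting the logarithm).} Use the identity $\log x=\log^+x-\log^+(1/x)$ for $x>0$. The left-hand side then becomes $m(r,F)-m(r,1/F)$. Rearranging gives the exact identity
\[
T\!\left(r,\tfrac1F\right)=m\!\left(r,\tfrac1F\right)+N\!\left(r,\tfrac1F\right)=T(r,F)-\log|c|.
\]
That is, $T(r,1/F)=T(r,F)+O(1)$ with an explicit constant.

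\textbf{Step 3 (removing the shift by $a$).} Since $a$ is a finite constant, $F=f-a$ has exactly the same poles as $f$, so $N(r,f-a)=N(r,f)$. For the proximity functions, apply the elementary inequalities
\[
\log^+|u+v|\le \log^+|u|+\log^+|v|+\log 2
\]
with $(u,v)=(f,-a)$ and with $(u,v)=(f-a,a)$. Integrating over the circle yields
\[
|m(r,f-a)-m(r,f)|\le \log^+|a|+\log 2 .
\]
Hence $T(r,f-a)=T(r,f)+O(1)$. Combining this with Step 2 gives $T\!\left(r,\frac{1}{f-a}\right)=T(r,f)+O(1)$, where the bounded term is at most $\log^+|a|+\log 2+|\log|c||$.

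The only delicate point is the bookkeeping at the origin when $f(0)\in\{a,\infty\}$. I would handle it by using the normalized counting functions $N(r,\cdot)$ that include the $\log r$ term for a zero or pole at $0$, so that Jensen's formula holds in the stated form. All other steps are elementary estimates.
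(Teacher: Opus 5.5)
Your proof is correct. Jensen's formula gives the exact identity $T(r,1/(f-a))=T(r,f-a)-\log|c|$, and the inequality $\log^+|u+v|\le\log^+|u|+\log^+|v|+\log 2$ yields $|T(r,f-a)-T(r,f)|\le\log^+|a|+\log 2$; you were also right to state the tacit hypothesis $f\not\equiv a$ explicitly.

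The paper does not prove this lemma; it quotes it from the Nevanlinna-theory reference cited just before it. Your argument is the classical proof of the First Fundamental Theorem.
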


We recall the second fundamental theorem in the form as needed below.
\begin{lemma}[Second Fundamental Theorem,\label{SecFunThe}]
Let $f$ be a meromorphic mapping in $\mathbb{C}$ and
$a_i$, $1\leq i\leq q$, be $q$ $(\geq 3)$ distinct values in
$\mathbb{\hat C}$. Then
\[
(q-2)T(r,f)
\leq
\sum_{i=1}^{q}
\overline{N}\!\left(r,\frac{1}{f-a_i}\right)
+S(r,f).
\]
\end{lemma}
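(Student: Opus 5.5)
We would prove Lemma~\ref{SecFunThe} by the classical Nevanlinna argument, taking the First Fundamental Theorem and the lemma on the logarithmic derivative, $m(r,f'/f)=S(r,f)$, as the standard inputs from~\cite{Lo2013}. Here $S(r,f)=o(T(r,f))$ as $r\to\infty$ outside a set of finite linear measure. We may assume $f$ is non-constant. If one of the $a_i$ is $\infty$, we compose with a M\"obius transformation $L$ sending all $a_i$ to finite points. Characteristics and reduced counting functions are preserved up to $O(1)$ under $f\mapsto L\circ f$, and $S(r,L\circ f)=S(r,f)$. The plan is to prove the stronger statement with $q$ finite points plus the point $\infty$, namely
\[
(q-1)T(r,f)\le \overline{N}(r,f)+\sum_{i=1}^{q}\overline{N}\!\left(r,\tfrac{1}{f-a_i}\right)+S(r,f),
\]
which gives the stated form after relabelling $q+1$ as $q$.

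\textbf{Step 1: combining the proximity functions.} Let $\delta=\min_{i\ne j}|a_i-a_j|$ and put $F=\sum_{i=1}^q \frac{1}{f-a_i}$. At any point where $|f-a_j|<\delta/(2q)$ the $j$-th term dominates, so $|F|\ge \frac{1}{2|f-a_j|}$ there. Splitting the circle $|z|=r$ according to which $a_j$ (if any) is close to $f$, we get
\[
m(r,F)\ge \sum_{i=1}^q m\!\left(r,\tfrac{1}{f-a_i}\right)-O(1).
\]

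\textbf{Step 2: introducing $f'$.} We write $F=\frac{1}{f'}\sum_i \frac{f'}{f-a_i}$. By the logarithmic derivative lemma,
\[
m(r,F)\le m(r,1/f')+S(r,f).
\]
Applying the First Fundamental Theorem to $f'$ and using Jensen's formula gives
\[
m(r,1/f')=T(r,f')-N(r,1/f')+O(1).
\]
Next we bound $T(r,f')=m(r,f')+N(r,f')$. We have $m(r,f')\le m(r,f)+m(r,f'/f)$, and $N(r,f')=N(r,f)+\overline{N}(r,f)$ since each pole of multiplicity $p$ becomes a pole of multiplicity $p+1$. Hence
\[
T(r,f')\le T(r,f)+\overline{N}(r,f)+S(r,f).
\]

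\textbf{Step 3: assembling and controlling ramification.} Adding $\sum_i N(r,1/(f-a_i))$ to both sides of Step 1 and using $T(r,1/(f-a_i))=T(r,f)+O(1)$ for each $i$, we obtain
\[
qT(r,f)\le T(r,f)+\overline{N}(r,f)+\sum_i N\!\left(r,\tfrac{1}{f-a_i}\right)-N(r,1/f')+S(r,f).
\]
A zero of $f-a_i$ of multiplicity $p$ is a zero of $f'$ of multiplicity $p-1$. Therefore
\[
N\!\left(r,\tfrac{1}{f-a_i}\right)-\overline{N}\!\left(r,\tfrac{1}{f-a_i}\right)
\]
summed over $i$ is at most $N(r,1/f')$, and this yields the claimed inequality.

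The main obstacle is the logarithmic derivative lemma, which carries all the analytic content; everything else is bookkeeping. We take it as known from~\cite{Lo2013}. If it had to be proved, we would use the Poisson--Jensen formula for $\log|f|$, differentiate under the integral, and apply Borel's growth lemma to remove an exceptional set of finite measure.
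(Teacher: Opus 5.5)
The paper does not prove this lemma; it quotes the Second Fundamental Theorem from the standard value-distribution reference it cites. Your argument is the classical Nevanlinna proof of that result (combining proximity functions, the logarithmic derivative lemma, and the ramification count via $N(r,1/f')$), and it is correct, with two small points in the reduction to fix: the M\"obius step points the wrong way and is not needed, since if $\infty$ is among the $a_i$ your inequality applied to the $q-1$ finite values is already the statement; and when all $a_i$ are finite you must still drop $\overline{N}(r,f)\le T(r,f)+O(1)$ (or send one $a_i$ to $\infty$), which you did not state.
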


The following multiplicity lemma is a standard consequence of the
second fundamental theorem and will be useful for
controlling the number of totally ramified values. 

\begin{lemma}\cite[Lemma 2.6]{charakbharti}
Let $f$ be a non-constant entire mapping. Then there are at most two
values $a$ for which all zeros of $f-a$ are multiple.
\end{lemma}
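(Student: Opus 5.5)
We argue by contradiction and use the Second Fundamental Theorem (Lemma~\ref{SecFunThe}) with the point $\infty$ added to the test values. Suppose $f$ is a non-constant entire function and there are three distinct values $a_1,a_2,a_3\in\mathbb{C}$ such that every zero of $f-a_i$ has multiplicity at least $2$, for $i=1,2,3$. A value that $f$ omits counts as one of these vacuously; this is harmless, since then $\overline{N}(r,1/(f-a_i))=0$.

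\textbf{Transcendental case.} Apply Lemma~\ref{SecFunThe} with $q=4$ and the values $a_1,a_2,a_3,\infty$:
\[
2T(r,f)\le \sum_{i=1}^{3}\overline{N}\!\left(r,\frac{1}{f-a_i}\right)+\overline{N}(r,f)+S(r,f).
\]
Since $f$ is entire, $\overline{N}(r,f)=0$. Since all $a_i$-points are multiple,
\[
\overline{N}\!\left(r,\frac{1}{f-a_i}\right)\le \tfrac12 N\!\left(r,\frac{1}{f-a_i}\right).
\]
The First Fundamental Theorem then gives
\[
\tfrac12 N\!\left(r,\frac{1}{f-a_i}\right)\le \tfrac12 T(r,f)+O(1).
\]
Combining these,
\[
2T(r,f)\le \tfrac32 T(r,f)+S(r,f),
\qquad\text{that is,}\qquad
\tfrac12 T(r,f)\le S(r,f).
\]
For transcendental $f$ we have $S(r,f)=o(T(r,f))$ as $r\to\infty$ outside a set of finite linear measure. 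This is a contradiction.

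\textbf{Polynomial case.} The main point needing care is that the estimate $S(r,f)=o(T(r,f))$ is only routinely available for transcendental $f$. I would therefore handle polynomials by a direct count instead. Let $f$ be a polynomial of degree $d\ge1$. Then $f-a_i$ has exactly $d$ zeros counted with multiplicity. As each zero is multiple, $f-a_i$ has at most $d/2$ distinct zeros $w$, and each such zero of multiplicity $m_w\ge2$ is a zero of $f'$ of multiplicity $m_w-1\ge m_w/2$. Hence $f'$ has at least $d/2$ zeros, counted with multiplicity, among the $a_i$-points of $f$. The $a_i$-points for different $i$ are disjoint, so $f'$ has at least $3d/2$ zeros in total. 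But $\deg f'=d-1<3d/2$, which is a contradiction.

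In both cases at most two such values can exist.
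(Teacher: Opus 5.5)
Your proof is correct and follows essentially the paper's approach. The paper only cites this lemma from Charak et al., but its proof of the multiplicity-three analogue (Lemma~\ref{twovaluelemma}) is the same Second Fundamental Theorem computation with $\infty$ added to the test values, with $\overline{N}\le\frac13 N$ in place of your $\overline{N}\le\frac12 N$. Your separate root count for polynomials is valid and covers a case the paper's argument handles only through the standard fact that $S(r,f)=O(1)$ for rational $f$.
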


The next lemma transfers the preceding multiplicity property to
sense-preserving harmonic mappings.

\begin{lemma}\label{threevaluelemma}\cite[Lemma 2.7]{charakbharti}
Let $f=h+\overline{g}$ be a sense-preserving harmonic mapping in
$\mathbb{C}$ with $g(0)=0$. Then there are at most two values $a$
for which all zeros of $f-a$ are multiple.
\end{lemma}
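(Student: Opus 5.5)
The statement to prove is Lemma~\ref{threevaluelemma}: for a sense-preserving harmonic $f=h+\overline{g}$ in $\mathbb{C}$ with $g(0)=0$, at most two values $a$ have all zeros of $f-a$ multiple. The plan is to reduce to the preceding multiplicity lemma for entire functions via the structure of entire sense-preserving harmonic maps. Sense-preservation gives $|g'|<|h'|$ wherever $J_f>0$, and more generally $|g'|\le |h'|$ on $\mathbb{C}$. Hence the dilatation $\omega=g'/h'$ is analytic on $\mathbb{C}$: zeros of $h'$ are removable since $|\omega|\le 1$ near them. It is also bounded by $1$, so by Liouville's theorem $\omega\equiv c$ with $|c|\le 1$; sense-preservation with $J_f>0$ forces $|c|<1$. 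Integrating and using $g(0)=0$ gives
\[
g=c\,(h-h(0)), \qquad\text{so}\qquad f=h+\overline{c}\,\overline{h}-\overline{c}\,\overline{h(0)}.
\]

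Next I would relate zeros of $f-a$ to zeros of $h-b$. Consider the real-linear map $L(w)=w+\overline{c}\,\overline{w}$ on $\mathbb{C}$. It is a bijection because $|c|<1$: its determinant as a real-linear map is $1-|c|^2>0$. Then
\[
f(z)-a=L(h(z))-\overline{c}\,\overline{h(0)}-a ,
\]
so $f(z)=a$ exactly when $h(z)=b_a$, where $b_a:=L^{-1}\bigl(a+\overline{c}\,\overline{h(0)}\bigr)$. The correspondence $a\mapsto b_a$ is injective, since $L$ is a bijection and the shift is fixed. Also $f$ is non-constant, because $J_f>0$; hence $h$ is non-constant.

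The key step is matching multiplicities. I would define a zero $z_0$ of $f-a$ to be multiple when the Jacobian vanishes there, or equivalently when $f-a$ has local degree at least $2$. Near $z_0$ we have $f-a=L\circ(h-b_a)$ with $L$ an orientation-preserving real-linear homeomorphism. So the local degree of $f-a$ at $z_0$ equals the order of the zero of $h-b_a$. Equivalently, $\partial_z f=h'$ and $\partial_{\bar z}f=\overline{c h'}$ vanish simultaneously, exactly when $h'(z_0)=0$. Therefore all zeros of $f-a$ are multiple if and only if all zeros of the entire function $h-b_a$ are multiple.

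Finally, apply the preceding lemma to the non-constant entire $h$: at most two values $b$ have all zeros of $h-b$ multiple. By injectivity of $a\mapsto b_a$, at most two values $a$ have this property for $f$. The step I expect to need most care is fixing the notion of ``multiple zero'' for a harmonic mapping so that it agrees with the multiplicity of zeros of $h-b_a$. I would use the local-degree (or $h'(z_0)=0$) formulation above. A secondary point is the Liouville argument: removability of the zeros of $h'$ for $\omega$ requires $|g'|\le|h'|$ everywhere, and this follows from $J_f\ge 0$ together with continuity.
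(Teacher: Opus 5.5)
Your proof is correct and takes the same route as the paper. The paper only cites this lemma, but its proof of the companion Lemma~\ref{twovalueharmonic} uses the same scheme: Liouville forces $g'/h'\equiv c$ with $|c|<1$, so $f=a$ is equivalent to $h=a^*$ under an injective correspondence $a\mapsto a^*$, and the entire-function lemma applied to $h$ finishes the argument. Your explicit check that multiplicities match via $f-a=L\circ(h-b_a)$ is a point the paper leaves implicit, and you handle it correctly.
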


We also need the following higher-multiplicity analogue for the 
higher-order arguments.
\begin{lemma}\label{twovaluelemma}
  Let $f$ be a non-constant entire mapping. Then there is at most 
  one value $a$ in $\mathbb{C}$ for which all zeros of $f-a$ have 
  multiplicity at least three.
\end{lemma}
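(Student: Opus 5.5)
We argue by contradiction with the second fundamental theorem (Lemma~\ref{SecFunThe}). Let $f$ be a non-constant entire function, and suppose there are two distinct values $a_1\neq a_2$ in $\mathbb{C}$ such that every zero of $f-a_1$ and of $f-a_2$ has multiplicity at least three. We apply Lemma~\ref{SecFunThe} with $q=3$ and the values $a_1,a_2,\infty$. Since $f$ is entire, it has no poles, so $\overline{N}(r,f)=0$. This gives
\[
T(r,f)\le \overline{N}\!\left(r,\frac{1}{f-a_1}\right)+\overline{N}\!\left(r,\frac{1}{f-a_2}\right)+S(r,f).
\]

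Because every zero of $f-a_i$ has multiplicity at least three, each distinct zero is counted at most one third as often as in the full counting function:
\[
\overline{N}\!\left(r,\frac{1}{f-a_i}\right)\le \frac13 N\!\left(r,\frac{1}{f-a_i}\right).
\]
By the First Fundamental Theorem,
\[
N\!\left(r,\frac{1}{f-a_i}\right)\le T\!\left(r,\frac{1}{f-a_i}\right)=T(r,f)+O(1).
\]
Substituting these bounds into the displayed inequality yields
\[
T(r,f)\le \tfrac23 T(r,f)+S(r,f)+O(1),
\]
that is,
\[
\tfrac13 T(r,f)\le S(r,f)+O(1).
\]

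The one point needing care is the error term. Recall that $S(r,f)=o(T(r,f))$ as $r\to\infty$ outside a set of finite linear measure. For a non-constant $f$ we have $T(r,f)\to\infty$ (indeed $T(r,f)\ge c\log r$ for large $r$). Hence the inequality $\tfrac13 T(r,f)\le o(T(r,f))+O(1)$ fails for all sufficiently large $r$ outside the exceptional set, which is a contradiction.

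Therefore at most one value $a\in\mathbb{C}$ has all zeros of $f-a$ of multiplicity at least three. This includes the case where $f-a$ has no zeros at all: such a value is vacuously totally ramified, and the argument above covers it, since then $\overline{N}(r,1/(f-a))=0$.
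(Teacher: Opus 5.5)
Your proof is correct and is essentially the paper's argument: apply the Second Fundamental Theorem to $\{a_1,a_2,\infty\}$, use $\overline{N}(r,f)=0$, bound $\overline{N}(r,1/(f-a_j))\le \frac13 N(r,1/(f-a_j))\le \frac13 T(r,f)+O(1)$, and arrive at $\frac13 T(r,f)\le S(r,f)$. Your extra remarks spell out two things the paper leaves implicit in calling the final inequality ``impossible'': that $T(r,f)\to\infty$ for non-constant $f$, so the error term is absorbed, and that the vacuous case (no zeros of $f-a$) is covered.
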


\begin{proof}
We use the Second Fundamental Theorem of 
Nevanlinna~\ref{SecFunThe} to prove the lemma. Suppose, to the
contrary, that there exist two distinct values
$a_1, a_2\in\mathbb{C}$ such that all zeros of
$f-a_j$ $(j=1,2)$ have multiplicity at least $3$. Since $f$ is
entire,
\[
\overline{N}(r,f)=0,
\]
and since every zero of $f-a_j$ has multiplicity at least 3,
\[
\overline{N}\!\left(r,\frac{1}{f-a_j}\right)
\leq
\frac{1}{3}
N\!\left(r,\frac{1}{f-a_j}\right)
\leq
\frac{1}{3}T(r,f)+O(1), \quad j=1,2.
\]

Hence applying Lemma~\ref{SecFunThe} to the set 
$\{a_1,a_2\}\cup\{\infty\}$,
\[
T(r,f)
\leq
\overline{N}(r,f)
+\sum_{j=1}^{2}
\overline{N}\!\left(r,\frac{1}{f-a_j}\right)
+S(r,f)
\leq
\frac{2}{3}T(r,f)+S(r,f),
\]
so that
\[
\frac{1}{3}T(r,f)\leq S(r,f),
\]
which is impossible. Hence the proof.
\end{proof}

The preceding result extends naturally to a
sense-preserving harmonic mapping as follows.
\begin{lemma}\label{twovalueharmonic}
  Let $f=h+\overline{g}$ be a sense-preserving harmonic mapping in
$\mathbb{C}$ with $g(0)=0$. Then there is at most one value $a$
for which all zeros of $f-a$ have multiplicity at least three.
\end{lemma}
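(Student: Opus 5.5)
We would prove Lemma~\ref{twovalueharmonic} by contradiction, following the way Lemma~\ref{threevaluelemma} is obtained from its entire-function counterpart. The idea is to use sense-preservation to turn multiplicities of $a$-points of $f$ into multiplicities of zeros of $h-a$, and then quote Lemma~\ref{twovaluelemma}.

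Suppose there are two distinct values $a_1,a_2\in\mathbb{C}$ such that every zero of $f-a_j$ has multiplicity at least three, $j=1,2$. The key structural fact concerns a point $z_0$ with $f(z_0)=a$. Since $f$ is sense-preserving, $|g'|<|h'|$ wherever $h'\neq 0$. Hence the dilatation $\omega=g'/h'$ is analytic with $|\omega|<1$, so $g'$ vanishes wherever $h'$ does, to at least the same order.

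The multiplicity of a zero of the sense-preserving map $f-a$ means its local degree, the winding number of $f-a$ around $0$ on small circles about $z_0$. We claim this degree equals the order of the zero of $h-h(z_0)$ at $z_0$.
\begin{itemize}[leftmargin=*]
\item If $h'(z_0)\neq0$, the Jacobian is positive at $z_0$, $f$ is locally univalent there, and the multiplicity is $1$.
\item Otherwise write $h(z)-h(z_0)=c(z-z_0)^m+\cdots$ with $c\neq0$. Then $g(z)-g(z_0)=O(|z-z_0|^{m})$ with a coefficient of modulus strictly smaller, because $|\omega(z_0)|<1$ forces $|g^{(m)}(z_0)|<|h^{(m)}(z_0)|$.
\item Consequently, on a small circle $|f-a-(h-h(z_0))|<|h-h(z_0)|$. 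By the argument principle for harmonic mappings (Duren, Ch.~1), the degree of $f-a$ at $z_0$ is $m$.
\end{itemize}
So a zero of $f-a$ of multiplicity at least three is the same as a point where $h'$ and $h''$ vanish; the same argument gives $g'(z_0)=g''(z_0)=0$.

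Next we pass from $f$ to an entire function. The cleanest route is to consider $h+\overline{g(z_0)}$ locally, but the values $\overline{g(z_0)}$ vary with $z_0$, so we cannot directly say that all zeros of $h-b$ are triple for a fixed $b$. The way around this should mirror \cite[Lemma 2.7]{charakbharti}. Either $\omega$ is constant, with $|\omega|<1$, or not.
\begin{itemize}[leftmargin=*]
\item If $\omega\equiv\alpha$ is constant, then $g=\alpha h+\text{const}$, so $f=h+\overline{\alpha h}+c$. The map $w\mapsto w+\overline{\alpha w}$ is a real-linear bijection of $\mathbb{C}$. Thus $f-a_j=L\circ(h-b_j)$ for distinct $b_j$ and an invertible real-linear map $L$. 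Multiplicities are then preserved, since $h'(z_0)=h''(z_0)=0$ is the criterion. Now $h-b_1$ and $h-b_2$ have only zeros of multiplicity $\geq3$. Lemma~\ref{twovaluelemma} gives a contradiction when $h$ is non-constant; if $h$ is constant, so is $f$, which is excluded.
\item If $\omega$ is non-constant, the plan is to use the fact that all zeros of $f-a_j$ lie in the zero set of $h'$, and to apply the Second Fundamental Theorem (Lemma~\ref{SecFunThe}) to $h$. The aim is to control $\overline N$ of the $a_j$-points of $f$ via $N(r,1/h')$ and $N(r,1/h'')$.
\end{itemize}

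The main obstacle is precisely this non-constant dilatation case. Here $f-a_j$ is not a composition of $h$ with a fixed map, so transferring the counting-function estimate from $f$ to an entire function requires care. We would first try to reproduce whatever reduction \cite{charakbharti} uses in Lemma~\ref{threevaluelemma}, replacing "multiple" by "multiplicity at least three" and the final appeal by Lemma~\ref{twovaluelemma}, since the structure should be identical.
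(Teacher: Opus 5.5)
Your proof has a genuine gap. You split into the cases $\omega$ constant and $\omega$ non-constant, but you only handle the first. For the second you offer an unexecuted plan (bounding $\overline{N}$ of the $a_j$-points of $f$ through $N(r,1/h')$ and $N(r,1/h'')$ in the Second Fundamental Theorem), which you yourself call the main obstacle. As written, the lemma is not proved for all sense-preserving $f$.

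The missing idea is a single line, and you already have both ingredients. You noted that $\omega=g'/h'$ is analytic with $|\omega|<1$, and $f$ is sense-preserving on the whole plane. So $\omega$ is a bounded entire function, and Liouville's theorem forces $\omega\equiv c$ with $|c|<1$. The non-constant case therefore never occurs. This is exactly the step the paper's proof rests on, following the proof of Lemma~2.5 in \cite{Bohra}.

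With that inserted, your constant case is the paper's argument:
\begin{itemize}
\item $g(0)=0$ gives $g=c\,(h-h(0))$.
\item Hence $f=a$ if and only if $h=a^*$, where $a^*$ is obtained by inverting the real-linear bijection $w\mapsto w+\overline{c\,w}$ (your map $L$).
\item Distinct values $a$ give distinct values $a^*$, since that map has determinant $1-|c|^2>0$.
\item Lemma~\ref{twovaluelemma} applied to the non-constant entire function $h$ then finishes.
\end{itemize}
Your local-degree discussion is correct: it shows that the multiplicity of an $a$-point of $f$ equals the order of the zero of $h-h(z_0)$. It supplies the justification, left implicit in the paper, that the correspondence $f=a\Leftrightarrow h=a^*$ preserves multiplicities. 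Once $\omega\equiv c$, this is immediate from $g'=c\,h'$. So drop the case split, add the Liouville step, and the proof is complete.
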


\begin{proof}
Let $f$ is sense-preserving harmonic mapping in $\mathbb{C}$,
 and $\omega(z)=g'(z)/h'(z)$, since $f$ is a sense-preserving harmonic 
mapping so $|\omega(z)|<1$ for $z\in\mathbb{C}$, 
therefore Liouville's theorem gives
$\omega(z)\equiv c$ as given in the proof~\cite[Lemma~2.5]{Bohra}, 
for any $a\in\mathbb{C}$ the equation
$f(z)=a$ is equivalent to
\begin{equation}\label{eq: hequivf}
h(z)=a^{*}:=
\frac{a-\overline{c\,a}
+\overline{c\,h(0)}-|c|^{2}h(0)}
{1-|c|^{2}}.
\end{equation}

Since by Lemma~\ref{twovaluelemma} there is at most
one value $a^{*}$ for which every zero of $h-a^{*}$ has 
multiplicity at least three, and hence there
is at most one value $a$ for which every zero of $f-a$
 has multiplicity at least three.
\end{proof}

\section{Proof of the theorem}
%proof of theorem 1.1
\begin{proof}[\bf{Proof of Theorem~\ref{Mainfunction}}]
 We argue by contradiction; suppose that $f$ is not a
$\varphi$-normal harmonic mapping. Then by Lemma~\ref{zalharfun},
there exist points
$\{z_n\}\subset\mathbb{D}$ with $|z_n|\to 1^-$ 
and positive real numbers 
$\rho_n$ with
$\rho_n\to 0$ as $n\to\infty$, such that the sequence
\begin{equation}\label{eq: zalharfun}
F_n(\zeta)
:=
f\!\left(
z_n+\frac{\rho_n\zeta}{\varphi(|z_n|)}
\right)
=
h\!\left(
z_n+\frac{\rho_n\zeta}{\varphi(|z_n|)}
\right)
+
\overline{
g\!\left(
z_n+\frac{\rho_n\zeta}{\varphi(|z_n|)}
\right),
}
\end{equation}
converges locally uniformly to
a non-constant
 sense-preserving harmonic mapping
$F(\zeta)=H(\zeta)+\overline{G(\zeta)}$ in $\mathbb{C}$.

Let
$
F_n(\zeta)=h_n(\zeta)+\overline{g_n(\zeta)},
$
where
$h_n(\zeta)
:=
h\!\left(
z_n+\dfrac{\rho_n\zeta}{\varphi(|z_n|)}
\right)$, and 
$g_n(\zeta)
:=
g\!\left(
z_n+\dfrac{\rho_n\zeta}{\varphi(|z_n|)}
\right).
$
Therefore
$
h_n^{(i)}(\zeta)\to H^{(i)}(\zeta)$,
and
$g_n^{(i)}(\zeta)\to G^{(i)}(\zeta)
$
as $n\to\infty$, consequently,

\[
F_n^{(i)}(\zeta)
=
\frac{\rho_n^{\,i}}{\varphi(|z_n|)^i}
f^{(i)}
\!\left(
z_n+\frac{\rho_n\zeta}{\varphi(|z_n|)}
\right)
\longrightarrow
F^{(i)}(\zeta)
=
H^{(i)}(\zeta)+\overline{G^{(i)}(\zeta)}.
\]
\smallskip 

\noindent
\textbf{Claim 1.}
Every zero of $F$ has multiplicity at least $k$.\smallskip

For $k=1$, the claim is immediate, since every zero has multiplicity at least $1$. 
Thus, assume that $k\ge2$.
 Let $\zeta_0\in\mathbb C$ be a zero of $F$ that is $F(\zeta_0)=0$.
Then, Lemma~\ref{hurwitz} guarantees the existence of
 a sequence
$\zeta_n\to\zeta_0$ such that, for all sufficiently large $n$
\[
F_n(\zeta_n)
=
f\!\left(
z_n+\frac{\rho_n\zeta_n}{\varphi(|z_n|)}
\right)
=0.
\]

By the hypothesis~\eqref{eq:1MainfunHyp}, there exists a
 constant $M>0$ such that
\[
\left|
f^{(i)}
\!\left(
z_n+\frac{\rho_n\zeta_n}{\varphi(|z_n|)}
\right)
\right|
\le M,
\qquad
i=1,\ldots,k-1.
\]

Therefore,
\begin{equation}\label{eq: fi0}
|F^{(i)}(\zeta_0)|
=
\lim_{n\to\infty}
|F_n^{(i)}(\zeta_n)|  \\
=
\lim_{n\to\infty}
\frac{\rho_n^{\,i}}
{\varphi(|z_n|)^i}
\left|f^{(i)}
\!\left(
z_n+\frac{\rho_n\zeta_n}{\varphi(|z_n|)}
\right)\right|     \\
\le
\lim_{n\to\infty}
M
\left(
\frac{\rho_n}{\varphi(|z_n|)}
\right)^i .
\end{equation}

Using $\varphi(|z_n|)(1-|z_n|)\geq1$, we obtain the following estimate:
\begin{equation}\label{eq: rho/varphi0}
  \frac{\rho_n}{\varphi(|z_n|)}\leq\frac{\rho_n}
{\varphi(|z_n|)(1-|z_n|)}\longrightarrow 0.
\end{equation}

It follows from \eqref{eq: fi0}, and~\eqref{eq: rho/varphi0} that

\[
|F^{(i)}(\zeta_0)|
\leq
\lim_{n\to\infty}
M
\left(
\frac{\rho_n}{\varphi(|z_n|)}
\right)^i \longrightarrow 0,
\qquad
i=1,\ldots,k-1.
\]

Therefore, $
F^{(i)}(\zeta_0)=0,
\,
i=1,\ldots,k-1,
$
which proves Claim~1.
\smallskip

\noindent
\textbf{Claim~2.}
If $\zeta \in \mathbb{C}$ satisies $F(\zeta)\in E$, then
$
H^{(k)}(\zeta)=0,
\text{and}\,
G^{(k)}(\zeta)=0.
$\smallskip

Let $a\in E$, and $\zeta_{0}\in \mathbb{C}$ such that
$F(\zeta_{0})=a$,
then by Lemma~\ref{hurwitz}, there exists a 
sequence $\{\zeta_n^*\}$ with
$\zeta_n^*\to\zeta_0$ such that for all sufficiently large $n$
\[
F_n(\zeta_n^*)
=
f\!\left(
z_n+\frac{\rho_n\zeta_n^*}{\varphi(|z_n|)}
\right)
=a, \quad \text{define}\, w_n^*
=
z_n+\frac{\rho_n\zeta_n^*}{\varphi(|z_n|)}.
\]

Since $w_n^*\in f^{-1}(E)$, so the hypothesis \eqref{eq:2MainfunHyp}
 implies that there exists
a constant $M^*>0$ such that
\begin{equation}\label{*}
  f^{\#(k)}(w_n^*)
\leq
M^*\,\varphi(|w_n^*|)^k.
\end{equation}

Using the definition of the extended spherical
 derivative~\eqref{eq:Ksphderhar} together inequality~\eqref{*}
we obtain
\[
F_n^{\#(k)}(\zeta_n^*)
=
\left(\frac{\rho_n}{\varphi(|z_n|)}\right)^k
f^{\#(k)}(w_n^*)\leq
M^*\rho_n^k
\left(
\frac{\varphi(|w_n^*|)}
{\varphi(|z_n|)}
\right)^k.
\]

Since $\rho_n\to0$ and
$\dfrac{\varphi(|w_n^*|)}{\varphi(|z_n|)}
\longrightarrow1$, it follows that
$
F_n^{\#(k)}(\zeta_n^*)\longrightarrow 0,
$ passing to the limit yields
\[
F^{\#(k)}(\zeta_0)=0.
\]

Using the definition of the extended spherical 
derivative~\eqref{eq:Ksphderhar},
we obtain
\[
|H^{(k)}(\zeta_0)|+|G^{(k)}(\zeta_0)|=0 \Longrightarrow H^{(k)}(\zeta_0)=0,
\, \text{and}\,\,
G^{(k)}(\zeta_0)=0.
\]

This completes the proof of Claim~2.
\smallskip

Now, since $F(\zeta)=H(\zeta)+\overline{G(\zeta)}$ is a 
sense-preserving harmonic mapping in $\mathbb{C}$, let 
$\omega(\zeta)=G'(\zeta)/H'(\zeta)$. Then $|\omega(\zeta)|<1$ for 
all $\zeta \in \mathbb{C}$, hence, by Liouville's theorem, 
$\omega(\zeta)\equiv c$, where $c \in \mathbb{C}$ 
and $|c|<1$. As in the proof of~\cite[Lemma 2.5]{Bohra}, for any
$a\in \mathbb{C}, \,  F(\zeta)=a$ is equivalent to
\begin{equation}\label{eq: HequiF}
  H(\zeta)=a^*= \frac{a-\overline{ca}+\overline{cH(0)}-|c|^2H(0)}{1-|c|^2}
\end{equation}

In particular, for each $a\in E$, the equation $F(\zeta)=a$ is
 equivalent to $H(\zeta)=a^*$. Thus, corresponding to the set
$ E$, we obtain a set $E^*=\{a^*: a\in E \}$. Moreover,
$H(\zeta)\in E^* \Longrightarrow H^{(k)}(\zeta)=0$. Since $F$ 
is a non-constant and sense-preserving harmonic mapping in
 $\mathbb{C}$ having zeros of multiplicity at least
$k$, we have $H^{(k)}\not\equiv 0$. Consequently,
\[
\sum_{a^* \in E^*}\overline{N}\left(r, \frac{1}{H-a^*}\right)\leq  
\overline{N}\left(r, \frac{1}{H^{(k)}}\right)\leq T(r,H^{(k)}).
\] 

Now, since $H$ is entire, $H^{(k)}$ is also entire. Hence,
$\overline{N}(r,H)= N(r,H)=
    N(r,H^{(k)})=0$. Writing $H^{(k)}=H\frac{H^{(k)}}{H}$, 
    and using the logarithmic derivative, we 
    obtain  
    \begin{eqnarray}\label{logrderiv}
    % \nonumber % Remove numbering (before each equation)
    T(r,H^{(k)})&=&m(r,H^{(k)})+O(1),\nonumber \\
    &\leq& m(r,H)+m(r,\frac{H^{(k)}}{H})+O(1),\nonumber \\
    T(r,H^{(k)})&\leq& T(r,H)+S(r,H).
   \end{eqnarray}

   Applying Lemma~\ref{SecFunThe} to the $k+3$ distinct values
   in $E^*\cup \{\infty\}$, we obtain   
\begin{eqnarray*}
% \nonumber % Remove numbering (before each equation)
  (k+3-2)T(r,H) &\leq &\overline{N}(r,H) +\sum_{a^* \in E^*}
  \overline{N}\left(r,\frac{1}{H-a^*}\right) +S(r,H), \\
  (k+1)T(r,H)&\leq& T(r,H)+S(r,H),  \\
    kT(r,H) &\leq& S(r,H).
\end{eqnarray*} 

Since $k \geq 1$, the last inequality is impossible. Therefore, $f$ 
is a $\varphi$-normal harmonic mapping.
\end{proof}\medskip

%proof of theorem 1.2
\begin{proof}[\bf{Proof of Theorem~\ref{mainfamily}}]
  Suppose, on the contrary, that $\mathcal{F}$ is not
$\varphi$-normal. Then by Lemma~\ref{zalcmann}, taking 
$\beta=0$, there exist
 a sequence of sense-preserving harmonic mappings 
 $\{f_n\} \subset \mathcal{F}$, points
$\{z_n\}\subset\mathbb{D}$ and positive real numbers
$\rho_n\to0$ such that the sequence
\begin{equation}\label{eq: zalcfamily}
F_n(\zeta)
:=
f_n\!\left(
z_n+\frac{\rho_n\zeta}{\varphi(|z_n|)}
\right)
=
h_n\!\left(
z_n+\frac{\rho_n\zeta}{\varphi(|z_n|)}
\right)
+
\overline{
g_n\!\left(
z_n+\frac{\rho_n\zeta}{\varphi(|z_n|)}
\right)
}
\end{equation}
converges locally uniformly in $\mathbb{C}$ to 
a non-constant
 sense-preserving harmonic mapping $
 F(\zeta)=H(\zeta)+\overline{G(\zeta)}.
$\smallskip

As in the proof of Theorem~\ref{Mainfunction}, the derivatives
of $F_n(\zeta)=H_n(\zeta)+\overline{G_n(\zeta)}$ converge locally 
uniformly to the corresponding 
derivatives of $F=H+\overline{G}$.

\noindent \textbf{Claim 1.}
Each zero of $F$ has multiplicity at least $k$.\smallskip

For $k=1$, the claim is immediate, since every zero has multiplicity at least $1$. Thus, assume that $k\ge2$.
Let $\zeta_0\in\mathbb C$ such that $F(\zeta_0)=0$,
then, by Lemma~\ref{hurwitz},  there exists
$\zeta_n\to\zeta_0$ such that $F_n(\zeta_n)=0$ for all sufficiently large $n$. 
Hence by~\eqref{eq:1mainfamily} and~\eqref{eq: rho/varphi0} it can be easily
obtained that
\[
|F^{(i)}(\zeta_0)|
\le
\lim_{n\to\infty}
M
\left(
\frac{\rho_n}{\varphi(|z_n|)}
\right)^i
\longrightarrow 0,
\qquad
i=1,\ldots,k-1.
\]

Therefore,
$
F^{(i)}(\zeta_0)=0,
\,
i=1,\ldots,k-1,
$
which proves Claim~1.\smallskip

\noindent
\textbf{Claim 2.}
If $\zeta \in \mathbb{C}$ satisfies $F(\zeta)\in E$, then
$
H^{(k)}(\zeta)=0, \text{and}\, 
G^{(k)}(\zeta)=0.
$\smallskip

Let $a\in E$, and let $\zeta_{0}\in \mathbb{C}$ satisfies
$
F(\zeta_{0})=a
$, then
by Lemma~\ref{hurwitz}, there exists a 
sequence $\{\zeta_n^*\}$ with
$\zeta_n^*\to\zeta_0$ such that for all sufficiently large $n$
\[
F_n(\zeta_n^*)
=
f_n\!\left(
z_n+\frac{\rho_n\zeta_n^*}{\varphi(|z_n|)}
\right)
=a, \quad \text{define} \, w_n^*
=
z_n+\frac{\rho_n\zeta_n^*}{\varphi(|z_n|)}.
\]

Since $w_n^*\in f^{-1}(E)$, the hypothesis \eqref{eq:2mainfamily}
 implies that there exists
a constant $M^*>0$ such that
\begin{equation}\label{thm21}
f_n^{\#(k)}(w_n^*)
\leq
M^*\,\varphi(|w_n^*|)^k.
\end{equation}

Using the definition of the extended spherical
 derivative~\eqref{eq:Ksphderhar} and inequlity~\eqref{thm21} we obtain
\[
F_n^{\#(k)}(\zeta_n^*)
=
\left(\frac{\rho_n}{\varphi(|z_n|)}\right)^k
f_n^{\#(k)}(w_n^*)\leq
M^*\rho_n^k
\left(
\frac{\varphi(|w_n^*|)}
{\varphi(|z_n|)}
\right)^k\longrightarrow0.
\]

Passing to the limit yields
$
F^{\#(k)}(\zeta_0)=0
$, then using the definition of the extended spherical 
derivative~\eqref{eq:Ksphderhar}
it is easy to conclude that
$
H^{(k)}(\zeta_0)=0,\,
\text{and}\,\,
G^{(k)}(\zeta_0)=0.$\smallskip

This completes the proof of Claim~2.\smallskip
 
Now, proceed similarly to the proof of 
Theorem~\ref{Mainfunction}, we see that 
for each $a\in E, \, F(\zeta)=a$ is equivalent to $H(\zeta)=a^*$,
where $a^*$
 is determined by ~\eqref{eq: HequiF}.
Thus, corresponding to the set $E$, we obtain a set $E^*=\{a^*: a\in E\}$.
Moreover, $H(\zeta)\in E^*\Longrightarrow H^{(k)}(\zeta)=0$.
 Since $F$ is a non-constant and sense-preserving harmonic 
 mapping in $\mathbb{C}$
having zeros of multiplicity at least $k$, we have $H^{(k)}\not\equiv 0$.
Therefore, using~\eqref{logrderiv} and applying Lemma~\ref{SecFunThe}
 to the distinct $k+3$ values in $E^*\cup \{\infty\}$ we obtain
\[
    kT(r,H) \leq S(r,H).
\] 

This is impossible for $k\geq1$. Hence $\mathcal{F}$ is a $\varphi$
  -normal family of harmonic mappings.
\end{proof}
\medskip

%proof of theorem 1.3
\begin{proof}[\bf{Proof of theorem~\ref{twovaluefunction}}]
Assume, on the contrary, that $f$ is not a
  $\varphi$-normal harmonic mapping. Applying the rescaling
argument used in the proof of Theorem~\ref{Mainfunction}, 
and retaining the same notations, we obtain
  a sequence of rescaled harmonic mappings as in~\eqref{eq: zalharfun}.
This sequence converges locally uniformly to a non-constant sense-preserving 
harmonic mapping of the form
  \[
  F(\zeta)=H(\zeta)+\overline{G(\zeta)}.
  \]

\noindent \textbf{Claim:} For every $a\in E$, each
 zero of $F-a$ has multiplicity at least three.
\smallskip

Let $\zeta_0 \in \mathbb{C}$ be a zero 
of $F-a$, that is $F(\zeta_0)=a$.
Then Lemma~\ref{hurwitz} implies that for all sufficiently large 
$n$, there exists a sequence of points $\{\zeta_n\}$
 in $\mathbb{C}$ converging to $\zeta_0$ such that,
\[
F_n(\zeta_n):=f\!\left(z_n+\frac{\rho_n\zeta_n}
{\varphi(|z_n|)}\right)=a.
\]

Define $\omega_n:=\!\left(z_n+\frac{\rho_n\zeta_n}
{\varphi(|z_n|)}\right)$
then, by the hypothesis~\eqref{eq:1twovaluefunction},
 there exists a constant 
$M_1>0$, such that
\begin{equation}\label{11*}
  \frac{|h'(\omega_n)|+|g'(\omega_n)|}
{1+|f(\omega_n)|^2}\leq M_1\varphi(|\omega_n|),
\end{equation}
\begin{equation}\label{*1*}
{|h'(\omega_n)|+|g'(\omega_n)|}
\leq M_1\varphi(|\omega_n|)({1+|f(\omega_n)|^2})
\leq M_1\varphi(|\omega_n|)({1+\max_{a\in E}|a|^2}).
\end{equation}

Therefore using~\eqref{eq:sphderhar} and~\eqref{11*} we have,
\[
F_n^\#(\zeta_n)=\frac{|h_n'(\zeta_n)|+|g_n'(\zeta_n)|}
{1+|F(\zeta_n)|^2}=\frac{\rho_n}{\varphi(|z_n|)}
\frac{|h'(\omega_n)|+|g'(\omega_n)|}{1+|f(\omega_n)|^2
}\leq M_1\rho_n \frac{\varphi(|\omega_n|)}{\varphi(|z_n|)}.
\]

It follows from $\rho_n \to 0$ and 
$\dfrac{\varphi(|\omega_n|)}{\varphi(|z_n|)} \longrightarrow 1$
that $F_n^{\#}(\zeta_n)\rightarrow 0 $ passing to the limit yields
\begin{equation}\label{eq: H'andG'=0}
  F^{\#}(\zeta_0)=\frac{|H'(\zeta_0)|+|G'(\zeta_0)|}{1+|F(\zeta_0)|^2}=0.
\end{equation}

Therefore, we have $H'(\zeta_0)=0$, and $G'(\zeta_0)=0$. 
Hence $\zeta_0$ is a zero of order at least
 2 of $F-a$. We now show that every zero of $F-a$ 
 has multiplicity at least $3$.
\smallskip

From the hypothesis~\ref{eq:2twovaluefunction}, for 
$\omega_n\in f^{-1}(E)$ there exists a constant $M_2>0$ such that
\begin{equation}\label{***}
  \frac{|h''(\omega_n)|+|g''(\omega_n)|}
{1+(|h'(\omega_n)|+|g'(\omega_n)|)^2}\leq M_2.
\end{equation}

Assume that $M=\max\{M_1, M_2\}$, then using~\eqref{*1*} and~\eqref{***} we have
\begin{eqnarray*}
% \nonumber % Remove numbering (before each equation)
  \frac{|h_n''(\zeta_n)|+|g_n''(\zeta_n)|}
  {1+(|h_n'(\zeta_n)|+|g_n'(\zeta_n)|)^2} 
  &=& \frac{\left(\frac{\rho_n}{\varphi(|z_n|)}
  \right)^2\left(|h''(\omega_n)|
  +|g''(\omega_n)|\right)}{1+\left(\frac{\rho_n}
  {\varphi(|z_n|)}\right)^2\left(|h'(\omega_n)|+
  |g'(\omega_n|\right)^2}, \\
  &=& \frac{\left(\frac{\rho_n}{\varphi(|z_n|)}\right)^2
  \left(|h''(\omega_n)|+|g''(\omega_n)|\right)}
  {1+\left(|h'(\omega_n)|+|g'(\omega_n)|\right)^2}\!\times\!
  \frac{1+\left(|h'(\omega_n)|+|g'(\omega_n)|\right)^2}
  {1+\left(\frac{\rho_n}{\varphi(|z_n|)}\right)^2
  \left(|h'(\omega_n)|+|g'(\omega_n)|\right)^2}, \\
   &\leq& M \left(\frac{\rho_n}{\varphi(|z_n|)}\right)^2
   (1+\left(|h'(\omega_n)|+|g'(\omega_n)|\right)^2),\\
  &\leq& \rho_n^2 M \left(1+(M(1+\max_{a\in E}|a|^2))^2\left(\frac{\varphi(|\omega_n|)}
  {\varphi(|z_n|)}\right)^2\right), \\
  &\to & 0 \,\, \text{as}\,\, n \to \infty.
\end{eqnarray*}

Thus, we obtain that $|H''(\zeta_0)|+|G''(\zeta_0|=0$, and hence
 $H''(\zeta_0)=G''(\zeta_0)=0$. Therefore, every
 zero of $F-a$ has multiplicity at least 3. Since $E$ consists of 
 two distinct values, this contradicts Lemma~\ref{twovalueharmonic}.
Thus, $f$ is a $\varphi$-normal harmonic mapping.  
\end{proof}
\medskip

%proof of theorem 1.4
\begin{proof}[\bf{Proof of Theorem~\ref{twovaluefamily}}]
  Assume, by contradiction, that $\mathcal{F}$ is not $\varphi$-normal.
  Using the same approach and notations in the proof of 
  Theorem~\ref{mainfamily}, we obtain
   a sequence of rescaled harmonic mappings as in~\eqref{eq: zalcfamily}, which
  converges locally uniformly to a non-constant sense-preserving
  harmonic mapping
  \[
  F(\zeta)=H(\zeta)+\overline{G(\zeta)}).
  \] 

\noindent {\bf Claim:} If $a\in E$, then every zero of $F-a$ has 
multiplicity at least three.

 To prove the claim, let
  $\zeta_0\in \mathbb{C}$ be a zero of $F-a$. By 
  Lemma~\ref{hurwitz}, for all sufficiently large $n$ 
there exists a sequence $\zeta_n$ 
converging to $\zeta_0$ such that
  \[F_n(\zeta_n):=f_n\!\left(z_n+\frac{\rho_n\zeta_n}
{\varphi(|z_n|)}\right):=f_n(\omega_n)=a.
  \]

Using $F_n(\zeta_n)=H_n(\zeta_n)+\overline{G_n(\zeta_n)}:=
 h_n(\omega_n)+\overline{g_n(\omega_n)}$ and arguing as in the proof of 
 Theorem~\ref{twovaluefunction}, we obtain the following: \smallskip
 
\noindent (i) An equation analogous to~\eqref{eq: H'andG'=0}, which yields
$|H'(\zeta_0)|+|G'(\zeta_0)|=0$,  consequently $H'(\zeta_0)=G'(\zeta_0)=0.$

\noindent (ii) Similarly $|H''(\zeta_0)|+|G''(\zeta_0)|=0$, and hence
$H''(\zeta_0)=0$, 
and $G''(\zeta_0)=0$.\smallskip

 Therefore, every zero of $F-a$ has multiplicity at least 3. Since $E$ 
  consists of two distinct values, this contradicts Lemma~\ref{twovalueharmonic}, 
  Hence, $\mathcal{F}$ is a $\varphi$-normal family of harmonic mappings.
\end{proof}
\medskip

%proof of theorem 1.5
\begin{proof}[\bf{Proof of Theorem~\ref{K: twovaluefunction}}]
 Suppose, to the contrary, that $f$ is not a
  $\varphi$-normal harmonic mapping.
   Applying the rescaling
argument used in the proof of Theorem~\ref{Mainfunction}, 
and retaining the same notations, we obtain
  a sequence of rescaled harmonic mappings as in~\eqref{eq: zalharfun}.
This sequence converges locally uniformly to a non-constant 
sense-preserving harmonic mapping of the form
  \[
  F(\zeta)=H(\zeta)+\overline{G(\zeta)}.
  \]

Moreover, as in the proof of Theorem~\ref{Mainfunction}, the derivatives
of $F_n(\zeta)=h_n(\zeta)+\overline{g_n(\zeta)}$ converge locally 
uniformly to the corresponding 
derivatives of $F(\zeta)=H(\zeta)+\overline{G(\zeta)}$.\smallskip

\noindent {\bf Claim 1:} Every zero of $F$ has multiplicity 
at least $k$.\smallskip

This follows directly from Claim~1 in the proof of
Theorem~\ref{Mainfunction}.\smallskip

 \noindent
\textbf{Claim 2.}
f $\zeta \in \mathbb{C}$ satisfies $F(\zeta)\in E$, then
$
H^{(k)}(\zeta)=0,\,
\text{and}\,
G^{(k)}(\zeta)=0.
$\smallskip

This follows directly from Claim~2 in the proof of
Theorem~\ref{Mainfunction}.
\smallskip

\noindent {\bf Claim 3:} f $\zeta \in \mathbb{C}$ satisfies $F(\zeta)\in E$, 
then
$ H^{(k+1)}(\zeta)=0, 
\,\text{and} \, G^{(k+1)}(\zeta)=0$.

Assume that $\zeta_0$ is a zero of $F-a$, for some $a\in E$, 
then by Lemma~\ref{hurwitz}, there exists a sequence $\zeta_n \to \zeta_0$
 such that for all sufficiently large $n$,
 \[
 F_n(\zeta_n)=f\!\left(
z_n+\frac{\rho_n\zeta_n}{\varphi(|z_n|)}
\right)=a, \,\, \text{define} \,\, \omega_n:=\!\left(
z_n+\frac{\rho_n\zeta_n}{\varphi(|z_n|)}
\right).
 \]
 \smallskip
 
From the hypothesis~\eqref{eq: K: 2twovaluefunction}, there exists a constant 
$M_1>0$, such that
 \[
 f^{\#(k)}(\omega_n)=\frac{|h^{(k)}(\omega_n)|+
 |g^{(k)}(\omega_n)|}{1+|f(\omega_n)|^{(k+1)}}
 \leq M_1\varphi(|\omega_n|),
 \]
 which implies
\begin{equation}\label{*2}
  {|h^{(k)}(\omega_n)|+|g^{(k)}(\omega_n)|}\leq 
  M_1\varphi(|\omega_n|)({1+|f(\omega_n)|^{(k+1)}})
\leq M_1\varphi(|\omega_n|)({1+\max_{a\in E}|a|^{(k+1)}}).
\end{equation}

Further, from the hypothesis~\eqref{eq: K: 3twovaluefunction},
there exists $M_2>0$ such that
\begin{equation}\label{*22}
  \frac{|h^{(k+1)}(\omega_n)|+|g^{(k+1)}(\omega_n)|}
  {1+(|h^{(k)}(\omega_n)|+|g^{(k)}(\omega_n)|)^{(k+1)}}\leq M_2.
\end{equation}

Assume that $M=\max\{M_1, M_2\}$. Then using the equations ~\eqref{*2} 
and ~\eqref{*22}, we have
\begin{eqnarray*}
% \nonumber % Remove numbering (before each equation)
  \frac{|h_n^{(k+1)}(\zeta_n)|+|g_n^{(k+1)}(\zeta_n)|}
  {1+(|h_n^{(k)}(\zeta_n)|+|g_n^{(k)}(\zeta_n)|)^{(k+1)}} 
  &=& \frac{\left(\frac{\rho_n}{\varphi(|z_n|)}
  \right)^{(k+1)}\left(|h^{(k+1)}(\omega_n)|
  +|g^{(k+1)}(\omega_n)|\right)}{1+\left(\frac{\rho_n}
  {\varphi(|z_n|)}\right)^{k(k+1)}\left(|h^{(k)}(\omega_n)|+
  |g^{(k)}(\omega_n|\right)^{(k+1)}}, \\
  &=& \frac{\left(\frac{\rho_n}{\varphi(|z_n|)}\right)^{(k+1)}
  \left(|h^{(k+1)}(\omega_n)|+|g^{(k+1)}(\omega_n)|\right)}
  {1+\left(|h^{(k)}(\omega_n)|+|g^{(k)}(\omega_n)|\right)^{(k+1)}}\\ 
  &&\!\times\!
  \frac{1+\left(|h^{(k)}(\omega_n)|+|g^{(k)}(\omega_n)|\right)^{(k+1)}}
  {1+\left(\frac{\rho_n}{\varphi(|z_n|)}\right)^{k(k+1)}
  \left(|h^{(k)}(\omega_n)|+|g^{(k)}(\omega_n)|\right)^{(k+1)}}, \\
   &\leq& M \left(\frac{\rho_n}{\varphi(|z_n|)}\right)^{(k+1)}
   \left(1+\left(|h^{(k)}(\omega_n)|+|g(^{(k)}\omega_n)|\right)^{(k+1)}\right),\\
  &\leq& \rho_n^{(k+1)} M \left(1+(M(1+\max_{a\in E}
  |a|^{(k+1)}))^{(k+1)}\left(\frac{\varphi(|\omega_n|)}
  {\varphi(|z_n|)}\right)^{(k+1)}\right),
   \\
  &\to & 0 \,\, \text{as}\,\, n \to \infty.
\end{eqnarray*}

Thus, we obtian that $|H^{(k+1)}(\zeta_0)|+|G^{(k+1)}(\zeta_0|=0$, and
 hence $H^{(k+1)}(\zeta_0)=0$, and 
 $G^{(k+1)}(\zeta_0)=0$.\smallskip
 
Now, using a similar approach to the proof of Theorem~\ref{Mainfunction}, we see that 
for each $a\in E$ the equation $F(\zeta)=a$ is equivalent to $H(\zeta)=a^*$ where $a^*$
 is determined by \eqref{eq: HequiF}.  Thus  
 corresponding to the set $E$, we obtian a set $E^*=\{a^*: a\in E\}$. Moreover,
 $H(\zeta)\in E^*\Longrightarrow H^{(k)}(\zeta)=0$, and 
 $H^{(k+1)}(\zeta)=0$. Since $F$ 
is a non-constant and sense-preserving harmonic mapping in
 $\mathbb{C}$ having zeros of multiplicity at least
$k$, we have $H^{(k)}\not\equiv 0$
 Therefore, using~\eqref{logrderiv} we get
 \[
\sum_{a^* \in E^*}\overline{N}\left(r, \frac{1}{H-a^*}\right)\leq
\frac{1}{2}N\left(r,\frac{1}{H^{(k)}}\right)\\ \leq \frac{1}{2} 
T(r,H^{(k)})\leq \frac{1}{2} T(r,H)+S(r,H).\]

Since $\overline{N}(r,H)=0$ as $H$ is entire, applying 
Lemma~\ref{SecFunThe} we obtain
\begin{eqnarray*}
% \nonumber % Remove numbering (before each equation)
  \left(\left[\frac{k}{2}\right]+3-2\right)T(r,H) 
  &\leq &\overline{N}(r,H) +\sum_{a^* \in E^*}
  \overline{N}\left(r,\frac{1}{H-a^*}\right) +S(r,H), \\
  \left(\left[\frac{k}{2}\right]+1\right)T(r,H)&\leq& 
  \frac{1}{2}T(r,H)+S(r,H),  \\
  \left(\left[\frac{k}{2}\right]+\frac{1}{2}\right)
  T(r,H) &\leq& S(r,H).
\end{eqnarray*} 

Since, for $k\geq 1$ this inequality is impossible, therefore $f$ 
is $\varphi$-normal harmonic mapping.
\end{proof}
\medskip

%proof of theorem 1.6
\begin{proof}[\bf{Proof of Theorem~\ref{K: twovaluefamily}}]
  Assume to the contrary, that $\mathcal{F}$ is not $\varphi$ normal,
  Using the same approach and notations in the proof of 
  Theorem~\ref{mainfamily}, we obtain
   a sequence of rescaled harmonic mappings as in~\eqref{eq: zalcfamily}, which
  converges locally uniformly to a non-constant sense-preserving
  harmonic mapping
  \[
  F(\zeta)=H(\zeta)+\overline{G(\zeta)}).
  \] 

Moreover, as in the proof of Theorem~\ref{Mainfunction}, the derivatives
of $F_n(\zeta)=H_n(\zeta)+\overline{G_n(\zeta)}$ converge locally 
uniformly to the corresponding 
derivatives of $F(\zeta)=H(\zeta)+\overline{G(\zeta)}.$\smallskip

\noindent {\bf Claim 1:} Each zero of $F$ has multiplicity at least $k$.
\smallskip

This follows directly from Claim~1 in the proof of
Theorem~\ref{mainfamily}.
\smallskip

\noindent
\textbf{Claim 2.}
If $\zeta \in \mathbb{C}$ satisfies $F(\zeta)\in E$, then
$
H^{(k)}(\zeta)=0,
\, \text{and}\,
G^{(k)}(\zeta)=0.
$\smallskip

This follows directly from Claim~2 in the proof of
Theorem~\ref{mainfamily}.
 \smallskip

\noindent {\bf Claim 3:} If $\zeta \in \mathbb{C}$ satisfies $F(\zeta)\in E$, then
$
H^{(k+1)}(\zeta)=0$, and $G^{(k+1)}(\zeta)=0$.\smallskip

Let $\zeta_0 \in \mathbb{C}$ be a zero of $F-a$,
for some $a\in E$, 
then by Lemma~\ref{hurwitz}, there exists a sequence 
$\zeta_n \to \zeta_0$
 such that, for all sufficiently large $n$
$ F_n(\zeta_n)=a$.
 \smallskip
 
 Proceeding similarly to the proof of Claim~3 in
 Theorem~\ref{K: twovaluefunction} using $F_n(\zeta_n)=
 H_n(\zeta_n)+\overline{G_n(\zeta_n)}:=
 h_n(\omega_n)+\overline{g_n(\omega_n)}$, we obtain
 $|H^{(k+1)}(\zeta_0)|+|G^{(k+1)}(\zeta_0)|=0$, hence $H^{(k+1)}(\zeta_0)=0$, and 
 $G^{(k+1)}(\zeta_0)=0$.\smallskip

Now, using a similar approach as in the proof of Theorem~\ref{Mainfunction}, we see that 
for each $a\in E$ the equation $F(\zeta)=a$ is equivalent to $H(\zeta)=a^*$ where $a^*$
 is determined by \eqref{eq: HequiF}.  Thus  
 corresponding to the set $E$, we obtian a set $E^*=\{a^*: a\in E\}$. Moreover,
 $H(\zeta)\in E^*\Longrightarrow H^{(k)}(\zeta)=0$, and 
 $H^{(k+1)}(\zeta)=0$. Since $F$ 
is a non-constant and sense-preserving harmonic mapping in
 $\mathbb{C}$ having zeros of multiplicity at least
$k$, we have $H^{(k)}\not\equiv 0$.
 Further, proceeding similarly to the proof of the 
 Theorem~\ref{K: twovaluefunction} using~\eqref{logrderiv}
  and Lemma~\ref{SecFunThe} we 
 get
\begin{equation*}
% \nonumber % Remove numbering (before each equation)
  \left(\left[\frac{k}{2}\right]+\frac{1}{2}\right)
  T(r, H) \leq S(r, H).
\end{equation*} 

Since this inequality is impossible,
hence $\mathcal{F}$ 
is a $\varphi$-normal family of harmonic mappings.
\end{proof}

\medskip

%proof of theorem 1.7
\begin{proof}[\bf{Proof of Theorem~\ref{threevaluepoly}}]
  
Assume, to the contrary, that $f$ is not a $\varphi$-normal harmonic
mapping. Then, by Lemma~\ref{zalcmann}, there exist sequences
$\{z_n\}\subset\mathbb D$ and $\{\rho_n\}$ with $\rho_n>0,\,
\rho_n\longrightarrow0,$
such that the sequence,
\[
F_n(\zeta)
=
f\!\left(
z_n+\frac{\rho_n\zeta}{\varphi(|z_n|)}
\right)
\]
converges locally uniformly to a
non-constant sense-preserving harmonic mapping
$F(\zeta)$ in $\mathbb C$.

Choose $a\in \mathbb{C}$ such that the equation $F(\zeta)=a$
has a simple zero at some point $\zeta_0$.
Since the zero is simple, the spherical derivative of $F$
does not vanish at $\zeta_0$, that is,
$F^{\#}(\zeta_0)>0$.

By Lemma~\ref{hurwitz}, 
there exists a sequence $\{\zeta_n\}$ satisfying
$\zeta_n\rightarrow\zeta_0$
such that, for all sufficiently large $n$
\[F_n(\zeta_n):=f(w_n)=a, \quad \text{where}\, w_n=\!\left(
z_n+\frac{\rho_n\zeta_n}{\varphi(|z_n|)}
\right).\]

Since $F_n\to F$ locally uniformly,
the corresponding spherical derivatives converge locally
uniformly, 
\[
F_n^{\#}(\zeta_n)
=
\frac{\rho_n}{\varphi(|z_n|)}
f^{\#}(w_n)\longrightarrow
\left(F^{\#}(\zeta_0)\right).
\]

Now, for every integer $t\ge1$, estimate the following
\[
  \frac{\left(f^{\#}(w_n)\right)^t}
{\varphi(|w_n|)} =\frac{\varphi(|z_n|)^t}
{\rho_n^t\,\varphi(|w_n|)}
\left(F_n^{\#}(\zeta_n)\right)^t
  \geq \frac{1}
{\rho_n^t\,(1-|z_n|)^{t-1}}\left(F_n^{\#}(\zeta_n)\right)^t 
\frac{\varphi(|z_n|)}
{\varphi(|w_n|)}.
\]

Since $
\dfrac{\varphi(|z_n|)}
{\varphi(|w_n|)}
\longrightarrow1,
$
while $\rho_n(1-|z_n|)\longrightarrow 0$,
we obtain $
\dfrac{\left(f^{\#}(w_n)\right)^t}
{\varphi(|w_n|)}
\longrightarrow\infty.$\smallskip

Let
$
P(z)=a_mz^m+a_{m-1}z^{m-1}+\cdots+a_1z+a_0$ be a non-constant polynomial,
where $a_m>0$, and each coefficient $a_j$ is a
non-negative real number.\smallskip

Therefore,
\[
\frac{P\!\left(f^{\#}(w_n)\right)}
{\varphi(|w_n|)}
=
\sum_{j=1}^{m}
a_j
\frac{\left(f^{\#}(w_n)\right)^j}
{\varphi(|w_n|)}
+
\frac{a_0}{\varphi(|w_n|)}.
\]

Hence,
\[
\frac{P\!\left(f^{\#}(w_n)\right)}
{\varphi(|w_n|)}
\longrightarrow\infty
\qquad\text{as }n\to\infty.
\]

Since $F$ is a non-constant sense-preserving harmonic mapping on
$\mathbb{C}$, Lemma~\ref{threevaluelemma} asserts that
 there are at most two complex
numbers for which every zero of $F-a$ is multiple.

Consequently, if $E$ is any set of three distinct test values in $\mathbb{C}$
then at least for one value $a\in E$,
$F(\zeta)=a$ has a simple zero.

Applying the conclusion obtained above to this value of $a$, we obtain 
a sequence $\omega_n \in f^{-1}(a)$ such that
\[
\sup_{\omega_n\in f^{-1}(E)}
\frac{P\!\left(f^{\#}(\omega_n)\right)}
{\varphi(|\omega_n|)}
\longrightarrow \infty.
\]

This contradicts the hypothesis~\eqref{eq: threevaluepoly}. Therefore,
 $f$ is a $\varphi$-normal harmonic mapping.
\end{proof}

\section*{Declarations}
\noindent{\bf Acknowledgement:}\quad The second author gratefully acknowledges
 the support received from CSIR-HRDG, New Delhi, India, through the 
Junior Research Fellowship, File No. 09\,/\,0961(21525) /2025-EMR-I.
\smallskip

\noindent{\bf Author Contributions:} \quad  All authors contributed equally.
 All authors wrote the main manuscript text
 and reviewed the manuscript.
\medskip

\noindent{\bf Data Availability:} \quad  Data sharing does not apply to 
this article, as no data sets
were generated or analysed during the current study.
\medskip

\noindent{\bf Ethical Approval:} \quad No such approval is required as this 
work does not involve any such human and/or animal
studies.
\smallskip

\noindent{\bf Conflict of Interest:}\quad  There is no conflict of interest 
related to this manuscript.
\smallskip

\noindent{\bf Competing Interest:} \quad The authors declare no
 competing interests.

\end{document}